\documentclass[11pt]{amsart}

\usepackage{amsmath,amssymb,amsfonts,amsthm}
\usepackage{enumitem}
\usepackage{microtype}
\usepackage{graphicx}
\usepackage{subcaption}
\usepackage{booktabs}
\usepackage{float}
\usepackage{placeins}
\usepackage{xcolor}
\usepackage{hyperref}
\usepackage[numbers]{natbib}
\usepackage[nameinlink,capitalize]{cleveref}

\hypersetup{hidelinks}

\newtheorem{theorem}{Theorem}[section]
\newtheorem{proposition}[theorem]{Proposition}

\newtheorem{remark}[theorem]{Remark}

\newcommand{\BTFPINN}{\operatorname{BTF\mbox{-}PINN}}

\begin{document}


\title{BTF-PINN: Enforcing Dirichlet Boundary Conditions Without Boundary Training}

\author{Wenyu Dong}
\address{Institute of Computational Mathematics and Scientific/Engineering Computing, Academy of Mathematics and System Sciences, Chinese Academy of Sciences, Beijing 100190, China}
\email{dongwenyu@lsec.cc.ac.cn}

\author{Shuo Zhang$^{\ast}$}
\thanks{*Corresponding author.}
\address{State Key Laboratory of Mathematical Sciences (SKLMS) and State Key Laboratory of Scientific and Engineering Computing (LSEC), Institute of
	Computational Mathematics and Scientific/Engineering Computing, Academy of Mathematics and Systems Science, Chinese Academy of Sciences,
	100190, Beijing, China}
\address{School of Mathematical Sciences, University of Chinese Academy of Sciences, 100049, Beijing, China}
\email{szhang@lsec.cc.ac.cn}

\begin{abstract}
		The homogeneous Dirichlet boundary value problem captures the core difficulty of solving Dirichlet problems with non-interpolatory methods. We propose BTF-PINN (Boundary-Training-Free Physics-Informed Neural Network), an interior-only strategy for solving homogeneous Dirichlet boundary value problems that requires no boundary training, boundary penalties, or boundary-conforming parametrizations. The key idea is to embed the essential boundary condition into a newly designed boundary-free loss function. We prove the equivalence between the proposed interior-only variational formulation and the original boundary value problem, establish a sharp threshold condition for the residual weight, and develop a convergence analysis based on the coercivity of the functional. Numerical experiments on high-dimensional problems, irregular geometries, and anisotropic elliptic equations demonstrate the effectiveness of BTF-PINN. Comparisons with standard boundary-penalty PINNs further show that BTF-PINN achieves superior boundary trace accuracy.	
	\end{abstract}
	
	\maketitle
	
	\begin{center}
\textbf{Keywords:} BTF-PINN; Dirichlet boundary enforcement; boundary-training-free formulation; Dirichlet boundary value problem; harmonic decomposition
\end{center}

\section{Introduction}
	\label{sec:introduction}
	
	Neural-network solvers provide a mesh-free representation of PDE solutions and
	have been useful in settings where high dimensionality or complicated geometry
	makes classical discretization expensive. Representative approaches include
	physics-informed neural networks (PINNs) \cite{raissi2019physics}, the Deep Ritz
	method \cite{eyu2018deepritz}, Deep Galerkin methods
	\cite{sirignano2018dgm}, weak and variational PINNs
	\cite{kharazmi2021hpvpinn}, and stochastic formulations for high-dimensional
	problems \cite{han2018solving}. Despite their different objectives, these
	methods share a common feature: the trial function is represented globally by a
	neural network rather than by boundary-fitted local basis functions.
	
	For Dirichlet problems, this flexibility comes with a familiar difficulty.
	A generic neural network does not satisfy prescribed boundary values, and the
	essential boundary condition must thus be incorporated through an
	additional mechanism. Typical choices include boundary collocation and penalty
	terms, distance-function or hard-constraint constructions, Nitsche-type
	formulations, and boundary-trace-based variational approaches. Penalty
	formulations require a balance between interior and boundary losses and may
	exhibit gradient-scale mismatch \cite{wang2021understanding}. Hard
	constraints and distance-function ansatzes depend on geometric information
	\cite{sukumar2022exact,berrone2023enforcing}, while Nitsche-type neural methods
	retain boundary integrals and stabilization parameters
	\cite{liao2021deep}. Boundary norms of \(H^{1/2}\)-type provide formulations
	consistent with trace regularity and offer another perspective for incorporating
	Dirichlet information \cite{liu2023deepritz,kim2025trace}. All these methods rely on explicit boundary training, boundary penalties, or boundary-conforming parametrizations.
	
	In contrast, natural boundary conditions can be directly incorporated into variational neural solvers: a Neumann condition appears in the
	Euler--Lagrange equation of the energy and therefore requires no
	separate boundary penalty. Such natural subproblems can consequently be
	handled by the Deep Ritz method in an unconstrained trial space. Recently, a key connection between natural and essential boundary conditions was revealed in \cite{yu2025natural} through the de~Rham complex. The connection can be showed via this model problem. Consider the Laplace equation on a 2D simply connected domain $\Omega$:
	\[
	-\Delta \tilde{u}=0\quad\text{in }\Omega,\qquad
	u=g\quad\text{on }\Gamma:=\partial\Omega .
	\]
	Since ${\rm div}\nabla\tilde{u}=0$, we can rewrite $\nabla\tilde{u}=\operatorname{curl}\phi$ for some $\phi\in H^1(\Omega)$ determined by
	\[
	(\operatorname{curl}\phi,\operatorname{curl}\psi)_{L^2(\Omega)}
	=
	\langle \partial_t g,\psi\rangle_\Gamma,
	\qquad
	\forall\psi\in H^1(\Omega),
	\]
	where \(\partial_tg\) denotes the weak tangential derivative of the boundary data, and further
	\[
	(\nabla\tilde{u},\nabla v)_{L^2(\Omega)}
	=
	-(\operatorname{curl}\phi,\nabla v)_{L^2(\Omega)},
	\qquad
	\forall v\in H^1(\Omega).
	\]
	Consequently, solving for $\tilde{u}$ (equivalently $\nabla\tilde{u}$) is reduced to solving a harmonic equation with a natural boundary condition for $\phi$, from which $\operatorname{curl}\phi$ is then recovered. This connection leads to a natural Deep Ritz method for essential boundary value problems, first developed in 2D in \cite{yu2025natural}, subsequently extended to higher dimensions in \cite{chen2026natdrm}, and generalized to classical meshfree methods in \cite{zhang2026ndm}. Boundary penalties and boundary-conforming parametrizations are avoided.  
	
	A nonhomogeneous Dirichlet problem can be decomposed into a homogeneous problem with the same source term and a harmonic lifting. As the preceding discussion shows, the homogeneous Dirichlet boundary value problem captures the core difficulty of solving Dirichlet problems with non-interpolatory methods. Accordingly, this paper focuses on the homogeneous Dirichlet problem and develops a boundary-training-free PINN method for it.

	The main results of this work are summarized as follows.
	\begin{enumerate}[label=(\roman*),leftmargin=2.2em]
		\item We prove that the homogeneous Dirichlet Poisson problem admits an
		equivalent boundary-free variational formulation based on the newly-designed functional \eqref{eq:functional}.
		\item We extend the minimization principle to symmetric uniformly elliptic
		matrix-valued divergence-form operators, with the intrinsic threshold
		determined by the first weighted Dirichlet eigenvalue.
		\item We establish a convergence analysis framework for the proposed variational
		formulation based on the coercivity of the loss function and approximation properties of
		the trial class.
		\item We test the method on oscillatory, high-dimensional, irregular-domain,
		and anisotropic heat-conduction problems, and examine the threshold behavior
		through a controlled sweep of the residual weight.  A four-dimensional
		interface problem is included as a hybrid extension in which only the
		internal transmission conditions are imposed explicitly.
	\end{enumerate}

	The remainder of this paper is organized as follows.  Section~\ref{sec:theory} proves the
	boundary-free minimization result, its sharpness, and the extension to uniformly elliptic matrix-valued coefficients.  Section~\ref{sec:btf-pinn} develops the convergence
	analysis.  Section~\ref{sec:numerics} presents the numerical experiments.
	Section~\ref{sec:conclusion} concludes the paper.
	
	\FloatBarrier

\section{A boundary-free equivalent formulation of Dirichlet problems}
	\label{sec:theory}

	\subsection{A boundary-free optimization problem for the Poisson equation}
	
	Let \(\Omega\subset\mathbb R^d\) be a bounded connected Lipschitz domain.  We
	write
	\[
	\|v\|_0=\|v\|_{L^2(\Omega)},\qquad
	|v|_1=\|\nabla v\|_{L^2(\Omega)}.
	\]
	Consider the Dirichlet problem
	\begin{equation}
		\begin{cases}
			-\Delta u=f, & \text{in } \Omega,\\
			u=0, & \text{on } \partial\Omega .
		\end{cases}
		\label{eq:poisson}
	\end{equation}
	The boundary-unconstrained strong-residual space is
	\begin{equation}
		\mathcal V
		=
		\bigl\{v\in H^1(\Omega):\Delta v\in L^2(\Omega)\bigr\},
		\label{eq:trial-space}
	\end{equation}
	where the Laplacian is understood in the distributional sense.  We assume that \eqref{eq:poisson} admits a solution
	\[
	u\in \mathcal V\cap H^1_0(\Omega).
	\]
	For \(C>0\),
	define
	\begin{equation}
		\mathcal J_f^C(v)
		=
		C\|\Delta v+f\|_0+|v|_1,
		\qquad v\in\mathcal V .
		\label{eq:functional}
	\end{equation}
	The functional involves only interior quantities and is invariant under the
	addition of a constant.  Consequently, any equivalent formulation must account
	for this constant ambiguity.
	
	Let \(C_P\) denote the sharp Dirichlet Poincaré constant of \(\Omega\), defined by
	\begin{equation}
		C_P
		:=
		\sup_{0\neq w\in H^1_0(\Omega)}
		\frac{\|w\|_0}{|w|_1}
		=
		\frac{1}{\sqrt{\lambda_1^D(\Omega)}} .
		\label{eq:poincare-constant}
	\end{equation}
	Then
	\begin{equation}
		\|w\|_0\le C_P |w|_1,
		\qquad
		\forall w\in H^1_0(\Omega).
		\label{eq:poincare}
	\end{equation}
	
	\begin{theorem}[Boundary-free equivalent formulation]
		\label{thm:minimization-property}
		Let \(u\in \mathcal V\cap H^1_0(\Omega)\) solve \eqref{eq:poisson}. If
		\(C>C_P\), then
		\[
		\mathcal J_f^C(v)\geq \mathcal J_f^C(u),
		\qquad
		\forall v\in\mathcal V .
		\]
		Equality holds if and only if \(v-u\) is a constant.
	\end{theorem}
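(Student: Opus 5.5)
The plan is to reduce to the perturbation \(w:=v-u\in\mathcal V\). Since \(-\Delta u=f\), we have \(\Delta v+f=\Delta w\) and \(\mathcal J_f^C(u)=|u|_1\). The claim therefore becomes
\[
C\|\Delta w\|_0+|u+w|_1\ \ge\ |u|_1,\qquad \forall w\in\mathcal V,
\]
with equality iff \(w\) is constant. The key device is a harmonic decomposition of \(w\).

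\textbf{Step 1: harmonic decomposition.} By Lax--Milgram (coercivity of \(|\cdot|_1\) on \(H^1_0(\Omega)\)), let \(w_0\in H^1_0(\Omega)\) solve \((\nabla w_0,\nabla \psi)=(\nabla w,\nabla\psi)\) for all \(\psi\in H^1_0(\Omega)\), and set \(h:=w-w_0\). Then:
\begin{itemize}
\item \(h\) is weakly harmonic, so \(\Delta h=0\) in \(\mathcal D'(\Omega)\) and hence \(\Delta w_0=\Delta w\in L^2(\Omega)\);
\item \((\nabla h,\nabla\psi)=0\) for every \(\psi\in H^1_0(\Omega)\), and in particular \((\nabla u,\nabla h)=0\) because \(u\in H^1_0(\Omega)\).
\end{itemize}
This orthogonality is what removes all boundary information: the part of \(w\) that carries its trace is invisible to \(u\).

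\textbf{Step 2: lower bound via Poincaré.} For \(u\neq0\), Cauchy--Schwarz gives
\[
|u+w|_1\,|u|_1\ \ge\ (\nabla u,\nabla(u+w))=|u|_1^2+(\nabla u,\nabla w_0).
\]
For the cross term, both \(u\) and \(w_0\) lie in \(H^1_0(\Omega)\) and \(\Delta w_0\in L^2(\Omega)\). The identity \((\nabla u,\nabla w_0)=-(u,\Delta w_0)\) then holds by the definition of the distributional Laplacian and density of \(C_c^\infty(\Omega)\) in \(H^1_0(\Omega)\). Hence
\[
(\nabla u,\nabla w_0)\ge-\|u\|_0\|\Delta w\|_0\ge -C_P|u|_1\|\Delta w\|_0
\]
by \eqref{eq:poincare}. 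Dividing by \(|u|_1\) gives \(|u+w|_1\ge |u|_1-C_P\|\Delta w\|_0\), so
\[
\mathcal J_f^C(v)\ \ge\ \mathcal J_f^C(u)+(C-C_P)\|\Delta w\|_0 .
\]
The degenerate case \(u=0\) is trivial, since then \(\mathcal J_f^C(v)=C\|\Delta w\|_0+|w|_1\ge0\).

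\textbf{Step 3: equality.} If equality holds and \(C>C_P\), then Step 2 forces \(\Delta w=0\). Then \(w_0\in H^1_0(\Omega)\) satisfies \(\Delta w_0=0\), so \(w_0=0\) and \(w=h\) is harmonic. By the orthogonality from Step 1,
\[
|u+h|_1^2=|u|_1^2+|h|_1^2 .
\]
Equality \(|u+h|_1=|u|_1\) therefore gives \(\nabla h=0\), and connectedness of \(\Omega\) makes \(w\) constant. In the case \(u=0\), equality gives \(|w|_1=0\) directly. The converse is immediate, since \(\mathcal J_f^C\) is invariant under adding constants.

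\textbf{Main obstacle.} I expect the only delicate point to be justifying the integration by parts \((\nabla u,\nabla w_0)=-(u,\Delta w_0)\) with merely \(H^1_0\) and \(L^2\)-Laplacian regularity, on a Lipschitz domain. This is handled by approximating \(u\) by \(C_c^\infty(\Omega)\) functions, which avoids any trace terms. Everything else is Hilbert-space orthogonality.
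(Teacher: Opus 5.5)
Your proof is correct, but your key estimate differs from the paper's. The paper applies the Poincar\'e inequality to the zero-trace part \(z\) of the error, getting \(|z|_1\le C_P\|\Delta z\|_0\) from \(|z|_1^2=-(\Delta z,z)\). It then uses the Pythagorean identity \(|v|_1^2=|u+z|_1^2+|h|_1^2\) together with the reverse triangle inequality \(|u+z|_1\ge|u|_1-|z|_1\). You instead test \(\nabla v\) against \(\nabla u\) and apply Poincar\'e to \(u\) itself, i.e.\ you use the subgradient inequality of \(|\cdot|_1\) at \(u\). This buys two things.

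First, the harmonic decomposition becomes unnecessary. Because \(u\in H^1_0(\Omega)\), the identity \((\nabla u,\nabla w)=-(u,\Delta w)\) holds directly for every \(w\in\mathcal V\) by the density argument you describe. In the equality case, \(\Delta w=0\) then gives \((\nabla u,\nabla w)=0\), hence \(|u+w|_1^2=|u|_1^2+|w|_1^2\) and \(|w|_1=0\). So \(w_0\), \(h\) and Lax--Milgram can be dropped entirely.

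Second, stopping before Poincar\'e gives \(|v|_1\ge|u|_1-\frac{\|u\|_0}{|u|_1}\|\Delta v+f\|_0\) for \(u\neq0\). Your argument therefore proves the conclusion under the weaker, solution-dependent condition \(C>\|u\|_0/|u|_1\). The constant \(C_P\) is the supremum of this quotient, attained at the first eigenfunction. This is exactly why Proposition~\ref{prop:sharp-threshold} uses \(u=\phi_1\), and why \(C>C_P\) is the threshold that is uniform in \(f\).

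What the paper's route gives in return is separate quantitative control of the two error components: \(|z|_1\le C_P\|\Delta z\|_0\), and \(|h|_1\) via the Pythagorean identity. These are precisely what Proposition~\ref{prop:variational-error} reuses to obtain coercivity in \(H^1\) modulo constants. Your duality bound controls only \(\|\Delta w\|_0\) and says nothing by itself about the harmonic part of the error.
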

	
\begin{proof}
	Fix \(v\in\mathcal V\). By the harmonic decomposition of
	\(H^1(\Omega)\), we write
	\[
	v-u=z+h,
	\]
	where \(z\in H_0^1(\Omega)\) and \(h\in\mathcal H\), with
	\[
	\mathcal H
	=
	\left\{
	w\in H^1(\Omega):
	\int_\Omega\nabla w\cdot\nabla\phi\,dx=0,
	\quad
	\forall\phi\in H_0^1(\Omega)
	\right\}.
	\]
	The decomposition is orthogonal with respect to the \(H^1\)-seminorm, namely,
	\[
	(\nabla h,\nabla\phi)=0,\qquad \forall\phi\in H_0^1(\Omega).
	\]
	Since \(h\in\mathcal H\), it is weakly harmonic, and together with
	\(-\Delta u=f\), we have
	\(\Delta v+f=\Delta z\) in the distributional sense.
	
	For \(z\in H_0^1(\Omega)\) with \(\Delta z\in L^2(\Omega)\), integration by
	parts and the Poincar\'e inequality give
	\[
	|z|_1^2=-(\Delta z,z)
	\leq
	\|\Delta z\|_0\|z\|_0
	\leq
	C_P\|\Delta z\|_0|z|_1 ,
	\]
	and hence \(|z|_1\leq C_P\|\Delta z\|_0\).
	
	Since \(u+z\in H_0^1(\Omega)\), the weak harmonicity of \(h\) yields
	\[
	|v|_1^2=|u+z+h|_1^2=|u+z|_1^2+|h|_1^2 .
	\]
	Therefore,
	\[
	|v|_1\geq |u+z|_1\geq |u|_1-|z|_1 .
	\]
	Combining this estimate with the residual identity gives
	\begin{equation*}
		\mathcal J_f^C(v)
		=
		C\|\Delta z\|_0+|v|_1
		\geq
		C\|\Delta z\|_0+|u|_1-|z|_1
		\geq
		|u|_1+(C-C_P)\|\Delta z\|_0 .
	\end{equation*}
	Since \(\mathcal J_f^C(u)=|u|_1\), we obtain
	\(\mathcal J_f^C(v)\geq\mathcal J_f^C(u)\).
	
	Now suppose that equality holds. Since \(C>C_P\), the above estimate
	implies \(\Delta z=0\) in the distributional sense. Together with
	\(z\in H_0^1(\Omega)\), this gives \(z=0\). Hence \(v-u=h\).
	The orthogonality relation then gives
	\[
	|v|_1^2=|u|_1^2+|h|_1^2 .
	\]
	Because equality of the functionals implies
	\(|v|_1=|u|_1\), we have \(|h|_1=0\). Thus \(h\) is constant on the
	connected domain \(\Omega\), and consequently \(v-u=c\) for some
	\(c\in\mathbb R\).
	
	Conversely, if \(v-u\) is a constant, then
	\(\Delta v+f=0\) in the distributional sense and
	\(|v|_1=|u|_1\), which gives
	\(\mathcal J_f^C(v)=\mathcal J_f^C(u)\).
\end{proof}
	
	\begin{remark}
	\ref{thm:minimization-property} shows that, for \(C>C_P\), the homogeneous
	Dirichlet problem \eqref{eq:poisson} is characterized by the boundary-free minimization problem
	\[
	\inf_{v\in\mathcal V}
	\Bigl[
	C\|\Delta v+f\|_0
	+
	|v|_1
	\Bigr],
	\]
	unique up to an additive constant. Note that the infimum is achievable. 
\end{remark}

	\begin{proposition}[Sharpness of the residual threshold]
		\label{prop:sharp-threshold}
		The condition $C > C_P$ established in 
		\ref{thm:minimization-property} is sharp, ensuring uniform recovery of the solution regardless of the source term $f$.  Let
		\((\lambda_1^D,\phi_1)\) be a first Dirichlet eigenpair,
		\[
		-\Delta\phi_1=\lambda_1^D\phi_1,
		\qquad
		\phi_1\in H^1_0(\Omega),
		\qquad
		C_P=\frac{1}{\sqrt{\lambda_1^D}},
		\]
		and consider \eqref{eq:poisson} with
		\[
		u=\phi_1,
		\qquad
		f=\lambda_1^D\phi_1.
		\]
		Then:
		\begin{enumerate}[label=(\roman*),leftmargin=2.2em]
			\item if \(C=C_P\), every
			\[
			v=t\phi_1+c,
			\qquad 0\le t\le1,
			\qquad c\in\mathbb R,
			\]
			is a minimizer of \(\mathcal J_f^C\);
			\item if \(C<C_P\), the Dirichlet solution \(u=\phi_1\) is not a
			minimizer of \(\mathcal J_f^C\).
		\end{enumerate}
	\end{proposition}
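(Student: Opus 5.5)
Both parts reduce to evaluating \(\mathcal J_f^C\) along the one-parameter family \(v=t\phi_1+c\) and combining this with the lower bound already obtained in the proof of \cref{thm:minimization-property}.

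\textbf{Evaluating the functional along the family.} Normalize \(\|\phi_1\|_0=1\), so that \(|\phi_1|_1=\sqrt{\lambda_1^D}=1/C_P\). For \(v=t\phi_1+c\) we have \(\Delta v+f=(1-t)\lambda_1^D\phi_1\). Hence
\[
\mathcal J_f^C(t\phi_1+c)=C|1-t|\lambda_1^D+|t|\sqrt{\lambda_1^D}.
\]
For \(0\le t\le 1\) this is an affine function of \(t\) with slope \(\sqrt{\lambda_1^D}\,(1-C\sqrt{\lambda_1^D})\), which is \(\sqrt{\lambda_1^D}(1-C/C_P)\).

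\textbf{Part (i).} When \(C=C_P\), the slope vanishes, so every such \(v\) gives the value \(\mathcal J_f^{C_P}(u)=|u|_1\). It remains to show that \(|u|_1\) is the infimum over all of \(\mathcal V\) when \(C=C_P\). For this I reuse the chain of inequalities in the proof of \cref{thm:minimization-property}. Writing \(v-u=z+h\), that proof derives
\[
\mathcal J_f^C(v)\ge |u|_1+(C-C_P)\|\Delta z\|_0
\]
without ever using strictness of \(C>C_P\). Setting \(C=C_P\) therefore gives \(\mathcal J_f^{C_P}(v)\ge |u|_1\) for all \(v\in\mathcal V\). Consequently all \(t\phi_1+c\) with \(t\in[0,1]\) are minimizers. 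The constant \(c\) does not matter, because the functional is invariant under adding constants.

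\textbf{Part (ii).} When \(C<C_P\), the slope is positive. Taking \(t=0\), i.e.\ \(v=0\), gives
\[
\mathcal J_f^C(0)=C\lambda_1^D<\sqrt{\lambda_1^D}=\mathcal J_f^C(u),
\]
so \(u\) is not a minimizer.

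\textbf{Main obstacle.} The only delicate point is part (i): the claim is that these functions are \emph{global} minimizers, not merely minimizers along the family. This needs the remark that the estimate in \cref{thm:minimization-property} remains valid in the non-strict case \(C=C_P\); the rest is direct computation.
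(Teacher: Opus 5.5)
Your proposal is correct and follows the paper's proof essentially step for step. The shared steps are:
\begin{itemize}
\item the explicit evaluation of $\mathcal J_f^C$ along the family $t\phi_1+c$;
\item the observation that the lower bound $\mathcal J_f^C(v)\ge |u|_1+(C-C_P)\|\Delta z\|_0$ from the proof of the main theorem never uses strictness, which yields global minimality at $C=C_P$;
\item the comparison with $v=0$ when $C<C_P$.
\end{itemize}
Your normalization $\|\phi_1\|_0=1$ is harmless, because $\mathcal J_f^C$ scales by $|s|$ under $(v,f)\mapsto(sv,sf)$. The paper simply carries the factor $\|\phi_1\|_{L^2(\Omega)}$ along instead.
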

	
	\begin{proof}
		For \(v=t\phi_1+c\),
		\[
		\Delta(t\phi_1)+f
		=
		(1-t)\lambda_1^D\phi_1,
		\qquad
		|t\phi_1|_1
		=
		|t|\sqrt{\lambda_1^D}\,
		\|\phi_1\|_{L^2(\Omega)}.
		\]
		Hence
		\[
		\mathcal J_f^C(t\phi_1+c)
		=
		\left(
		C\lambda_1^D|1-t|
		+
		\sqrt{\lambda_1^D}|t|
		\right)
		\|\phi_1\|_{L^2(\Omega)}.
		\]
		If \(C=C_P=1/\sqrt{\lambda_1^D}\), then for \(0\le t\le1\),
		\[
		\mathcal J_f^C(t\phi_1+c)
		=
		\sqrt{\lambda_1^D}\,
		\|\phi_1\|_{L^2(\Omega)}
		=
		\mathcal J_f^C(\phi_1).
		\]
		The lower bound in the proof of
		\ref{thm:minimization-property} remains valid at \(C=C_P\), so
		\(\mathcal J_f^C(v)\ge\mathcal J_f^C(\phi_1)\) for all
		\(v\in\mathcal V\).  Therefore all functions above are minimizers.
		
		If \(C<C_P\), then
		\[
		\mathcal J_f^C(0)
		=
		C\lambda_1^D
		\|\phi_1\|_{L^2(\Omega)}
		<
		\sqrt{\lambda_1^D}
		\|\phi_1\|_{L^2(\Omega)}
		=
		\mathcal J_f^C(\phi_1),
		\]
		which proves the second assertion.
	\end{proof}
	
	\begin{remark}[An explicit upper bound for the threshold $C_P$]
		For the optimal Dirichlet Poincar\'e constant,
		\begin{equation}
			C_P=\frac{1}{\sqrt{\lambda_1^D(\Omega)}},
		\end{equation}
		where \(\lambda_1^D(\Omega)\) is the first Dirichlet eigenvalue of
		\(-\Delta\). The Li--Yau lower bound (\cite{liyau1983schrodinger})
		\[
		\lambda_1^D(\Omega)
		\ge
		\frac{d}{d+2}C_d|\Omega|^{-2/d},
		\qquad
		C_d=(2\pi)^2B_d^{-2/d},
		\]
		where \(B_d=\pi^{d/2}/\Gamma(d/2+1)\) is the volume of the unit ball in
		\(\mathbb R^d\), yields the explicit estimate
		\begin{equation}\label{rem:li-yau-bound}
			C_P
			\le
			\left(\frac{d+2}{dC_d}\right)^{1/2}
			|\Omega|^{1/d}.
		\end{equation}
		Thus, the Li--Yau inequality (\cite{liyau1983schrodinger}) provides a computable sufficient condition on the residual weight for general domains.
	\end{remark}
	
	\begin{remark}[Practical choice of the residual weight]
		\label{rem:practical-C}
		The theory requires \(C>C_P\).  On a general domain where the Dirichlet
		Poincar\'e constant is not known analytically, several practical strategies
		are available:
		\begin{enumerate}[label=(\roman*),leftmargin=2.2em]
			\item On the unit cube \((0,1)^d\), the threshold is explicit:
			\(C_\ast=(\pi\sqrt d)^{-1}\).  A choice \(C\approx 1.5\,C_\ast\)
			provides a practical margin, as demonstrated by the numerical sweep in
			Section~\ref{sec:threshold}.
			\item For a general Lipschitz domain, the Li--Yau bound
			\eqref{rem:li-yau-bound} gives an upper bound \(C_P^{\rm ub}\) for
			\(C_P\).  Choosing \(C\) slightly above this conservative bound, e.g.\
			\(C = 1.2\,C_P^{\rm ub}\), satisfies the sufficient condition.
			\item Alternatively, \(C_P\) can be estimated numerically by solving a
			coarse finite-element eigenvalue problem for \(\lambda_1^D(\Omega)\) and
			using \eqref{eq:poincare-constant}.
		\end{enumerate}
	\end{remark}

	\subsection{Extension to uniformly elliptic matrix-valued coefficients}

	Let \(A\in L^\infty(\Omega;\mathbb R^{d\times d})\) be symmetric and
	uniformly elliptic, namely,
	\[
	\alpha|\xi|^2
	\leq
	\xi^{\top}A(x)\xi
	\leq
	\beta|\xi|^2
	\qquad
	\text{for a.e. }x\in\Omega,
	\quad
	\xi\in\mathbb R^d .
	\]
	We define the divergence-form operator
	\[
	\mathcal L_Av:=\nabla\cdot(A\nabla v),
	\]
	the weighted \(H^1\)-seminorm
	\[
	|v|_A
	:=
	\left(
	\int_\Omega A\nabla v\cdot\nabla v\,dx
	\right)^{1/2},
	\]
	and the corresponding strong-residual space
	\[
	\mathcal V_A
	:=
	\{v\in H^1(\Omega):\mathcal L_Av\in L^2(\Omega)\}.
	\]
	Let
	\[
	\lambda_1^A
	:=
	\inf_{0\neq w\in H_0^1(\Omega)}
	\frac{|w|_A^2}{\|w\|_0^2},
	\qquad
	C_A:=(\lambda_1^A)^{-1/2}.
	\]
	For the functional
	\[
	\mathcal J_{A,q}^C(v)
	=
	C\|\mathcal L_Av+q\|_0
	+
	|v|_A ,
	\]
	we have the following extension.
	
	\begin{proposition}[Extension to uniformly elliptic matrix-valued coefficients]
		\label{prop:variable-coefficient-extension}
		Let \(u\in\mathcal V_A\cap H_0^1(\Omega)\) solve
		\[
		-\mathcal L_Au=q
		\quad\text{in }\Omega .
		\]
		If \(C>C_A\), then
		\[
		\mathcal J_{A,q}^C(v)\geq
		\mathcal J_{A,q}^C(u),
		\qquad
		\forall v\in\mathcal V_A ,
		\]
		and equality holds if and only if \(v-u\) is a constant.
		Moreover,
		\[
		C_A\leq \frac{C_P}{\sqrt{\alpha}} .
		\]
	\end{proposition}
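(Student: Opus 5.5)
The plan is to transplant the proof of \cref{thm:minimization-property} verbatim, replacing the Euclidean \(H^1\)-seminorm by the energy seminorm \(|\cdot|_A\) and the harmonic decomposition by an \(A\)-harmonic decomposition. Fix \(v\in\mathcal V_A\) and write \(v-u=z+h\). Here \(z\in H_0^1(\Omega)\) is the unique solution of
\[
(A\nabla z,\nabla\phi)=(A\nabla(v-u),\nabla\phi)\quad\forall\phi\in H_0^1(\Omega),
\]
which exists by Lax--Milgram, since \((A\nabla\cdot,\nabla\cdot)\) is coercive on \(H_0^1\) by uniform ellipticity and the Poincar\'e inequality. The remainder \(h:=v-u-z\) then lies in
\[
\mathcal H_A=\{w\in H^1(\Omega):(A\nabla w,\nabla\phi)=0\ \forall\phi\in H_0^1(\Omega)\}.
\]
It follows that \(\mathcal L_Ah=0\) in the distributional sense. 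Hence
\[
\mathcal L_Av+q=\mathcal L_Au+q+\mathcal L_Az=\mathcal L_Az,
\]
and in particular \(\mathcal L_Az\in L^2(\Omega)\).

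Next I bound \(|z|_A\) by the residual. Because \(z\in H_0^1\) and \(\mathcal L_Az\in L^2\), the definition of the distributional operator tested against \(z\) (by density of \(C_c^\infty\) in \(H_0^1\)) gives
\[
|z|_A^2=-(\mathcal L_Az,z)\le\|\mathcal L_Az\|_0\|z\|_0.
\]
The definition of \(\lambda_1^A\) gives \(\|z\|_0\le C_A|z|_A\), so \(|z|_A\le C_A\|\mathcal L_Az\|_0\). Symmetry of \(A\) makes \((A\nabla\cdot,\nabla\cdot)\) an inner product, and with \(u+z\in H_0^1\) the \(A\)-orthogonality yields
\[
|v|_A^2=|u+z|_A^2+|h|_A^2.
\]
Therefore
\[
\mathcal J^C_{A,q}(v)\ge C\|\mathcal L_Az\|_0+|u|_A-|z|_A\ge |u|_A+(C-C_A)\|\mathcal L_Az\|_0\ge\mathcal J^C_{A,q}(u).
\]

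For the equality case, \(C>C_A\) forces \(\mathcal L_Az=0\). Then \(|z|_A^2=0\), so \(z=0\). Equality also forces \(|h|_A=0\), and ellipticity gives \(\alpha\|\nabla h\|_0^2\le|h|_A^2=0\), so \(h\) is constant on the connected domain \(\Omega\). Conversely, adding a constant changes neither \(\nabla v\) nor \(\mathcal L_Av\), so \(\mathcal J^C_{A,q}(v)=\mathcal J^C_{A,q}(u)\).

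For the bound on \(C_A\), I use \(|w|_A^2\ge\alpha|w|_1^2\ge\alpha C_P^{-2}\|w\|_0^2\) for all \(w\in H_0^1\), by \eqref{eq:poincare}. This gives \(\lambda_1^A\ge\alpha/C_P^2\), i.e. \(C_A\le C_P/\sqrt\alpha\).

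The only genuinely delicate step is the decomposition, since the Laplacian case could simply cite the harmonic decomposition while here it has to be built. One must check that \(\mathcal L_Az\in L^2\) is legitimate, which follows from \(\mathcal L_Az=\mathcal L_Av+q\), and that \(\mathcal L_Ah=0\) holds distributionally, which follows from testing with \(C_c^\infty\subset H_0^1\). The integration-by-parts identity \(|z|_A^2=-(\mathcal L_Az,z)\) must be justified by density rather than assumed regularity. Everything else is a line-by-line copy of the earlier proof.
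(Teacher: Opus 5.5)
Your proof is correct and follows the paper's argument essentially line by line. Both use the $A$-harmonic decomposition $v-u=z+h_A$, the bound $|z|_A\le C_A\|\mathcal L_A z\|_0$ obtained from $|z|_A^2=-(\mathcal L_A z,z)$, $A$-orthogonality to get $|v|_A\ge|u|_A-|z|_A$, and the ellipticity--Poincar\'e comparison for $C_A\le C_P/\sqrt{\alpha}$; your Lax--Milgram construction of $z$ and the density justification of the integration-by-parts identity simply fill in details the paper takes for granted.
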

	
	\begin{proof}
		Fix \(v\in\mathcal V_A\). By the harmonic decomposition associated
		with the elliptic operator \(\mathcal L_A\), we write
		\[
		v-u=z+h_A,
		\]
		where \(z\in H_0^1(\Omega)\) and \(h_A\) belongs to the weakly
		\(A\)-harmonic space
		\[
		\mathcal H_A
		=
		\left\{
		w\in H^1(\Omega):
		\int_\Omega A\nabla w\cdot\nabla\phi\,dx=0,
		\quad
		\forall\phi\in H_0^1(\Omega)
		\right\}.
		\]
		Since \(h_A\) is weakly \(A\)-harmonic and
		\(-\mathcal L_Au=q\), we have
		\(\mathcal L_Av+q=\mathcal L_Az\) in the distributional sense.
		
		By the definition of \(C_A\),
		\(\|z\|_0\leq C_A|z|_A\). Therefore,
		\[
		|z|_A^2
		=-(\mathcal L_Az,z)
		\leq
		\|\mathcal L_Az\|_0\|z\|_0
		\leq
		C_A\|\mathcal L_Az\|_0|z|_A ,
		\]
		and hence
		\[
		|z|_A\leq C_A\|\mathcal L_Az\|_0 .
		\]
		
		The \(A\)-orthogonality of \(h_A\) gives
		\[
		|v|_A^2=|u+z|_A^2+|h_A|_A^2 .
		\]
		Hence
		\[
		|v|_A\geq |u+z|_A\geq |u|_A-|z|_A .
		\]
		Consequently,
		\[
		\begin{aligned}
			\mathcal J_{A,q}^C(v)
			&=
			C\|\mathcal L_Az\|_0+|v|_A\\
			&\geq
			|u|_A+(C-C_A)\|\mathcal L_Az\|_0 .
		\end{aligned}
		\]
		Since \(\mathcal J_{A,q}^C(u)=|u|_A\), the minimality follows.
		
		Assume now that equality holds. Since \(C>C_A\), we obtain
		\(\mathcal L_Az=0\). Together with \(z\in H_0^1(\Omega)\), this implies
		\[
		|z|_A^2=-(\mathcal L_Az,z)=0,
		\]
		and hence \(z=0\). Thus \(v-u=h_A\). The orthogonality relation yields
		\[
		|v|_A^2=|u|_A^2+|h_A|_A^2 .
		\]
		Because \(|v|_A=|u|_A\), we have \(|h_A|_A=0\). By uniform ellipticity,
		\(h_A\) is constant on the connected domain \(\Omega\), and therefore
		\(v-u\) is constant.
		
		Conversely, if \(v-u\) is constant, then
		\(\mathcal L_Av+q=0\) and \(|v|_A=|u|_A\), which gives
		\(\mathcal J_{A,q}^C(v)=\mathcal J_{A,q}^C(u)\).
		
		Finally, for \(w\in H_0^1(\Omega)\), uniform ellipticity and the Poincar\'e
		inequality imply
		\[
		|w|_A^2\geq
		\alpha\|\nabla w\|_0^2
		\geq
		\frac{\alpha}{C_P^2}\|w\|_0^2 .
		\]
		Hence \(\lambda_1^A\geq\alpha/C_P^2\), and therefore
		\(C_A\leq C_P/\sqrt{\alpha}\).
	\end{proof}
	
	\begin{remark}[Recovery of the additive constant in practice]
		\label{rem:constant-recovery}
		The additive constant ambiguity is intrinsic to any formulation that depends
		only on \(\Delta v\) and \(\nabla v\).  In applications where the absolute
		level of the solution carries physical meaning, the constant must be fixed by
		an additional scalar condition.  Several options are available:
		\begin{enumerate}[label=(\roman*),leftmargin=2.2em]
			\item Prescribe the mean value \(\frac{1}{|\Omega|}\int_\Omega v\,dx = m_0\),
			as is common when fixing the gauge in pure Neumann problems.
			\item Supply one pointwise reference value \(v(x_0)=u_0\) at a location
			where the solution is known, e.g.\ from a sensor or a symmetry condition.
			\item In time-dependent extensions, the initial condition naturally fixes
			the constant.
		\end{enumerate}
		In the numerical experiments of this paper, the constant is determined by
		post-training alignment with the exact solution for diagnostic purposes
		(Section~\ref{sec:numerics}).  This is a validation protocol, not a
		requirement of the method.
	\end{remark}

	\FloatBarrier

\section{A boundary-training-free physics-informed neural network framework}
	\label{sec:btf-pinn}
	In this section, we consider a neural-network trial class as a particular
	realization of the approximation space and introduce the
	\emph{boundary-training-free physics-informed neural network} (BTF-PINN).
	
	Let
	\[
	\mathcal N_\theta
	=
	\{v_\theta(\cdot;\vartheta):\vartheta\in\Theta\}
	\]
	be a fully connected neural-network trial class, where \(\vartheta\)
	denotes the trainable parameters. 
	
	The BTF-PINN problem is defined as
	\begin{equation}
		\text{find } v_\theta^\ast \in\mathcal N_\theta\text{ such that }
		\mathcal J_f^C(v_\theta^\ast)
		=
		\min_{v_\theta\in\mathcal N_\theta}
		C\|\Delta v_\theta+f\|_0
		+
		|v_\theta|_1.
		\label{eq:btf-pinn-discrete-problem}
	\end{equation}
	
	\begin{remark}
		The global minimizer in \eqref{eq:btf-pinn-discrete-problem} is not necessarily
		attained for general neural-network trial classes.
		For any
		\(\varepsilon>0\), we consider an \(\varepsilon\)-suboptimal global minimizer
		\(v_\theta^\varepsilon\in\mathcal N_\theta\) satisfying
		\[
		\mathcal J_f^C(v_\theta^\varepsilon)
		\leq
		\inf_{v_\theta\in\mathcal N_\theta}
		\mathcal J_f^C(v_\theta)+\varepsilon .
		\]
		Such an \(\varepsilon\)-optimal solution always exists by the definition of
		the infimum. This framework is standard in non-convex optimization, where global minimizers may fail to exist and $\varepsilon$-optimal solutions provide a natural alternative; see, e.g., \cite{houska2012global}.
	\end{remark}	
	
	The main result of this section is the C\'ea-type estimate below.
	\begin{theorem}[Convergence analysis for BTF-PINN]
		\label{thm:btf-pinn-cea}
		
		Assume \(C>C_P\). Let \(v_\theta^\varepsilon\) be an
		\(\varepsilon\)-suboptimal global minimizer of \(\mathcal J_f^C\) over
		\(\mathcal N_\theta\), and let \(u\) be the solution of
		\eqref{eq:poisson}. 
		There exists a constant \(K>0\), such that,
		\begin{equation}
			\inf_{c\in\mathbb R}
			\|v_\theta^\varepsilon-u-c\|_{H^1(\Omega)}
			\leq
			K
			\inf_{w_\theta\in\mathcal N_\theta}
			\left\{
			C\|\Delta(w_\theta-u)\|_0
			+
			|w_\theta-u|_1
			+
			\varepsilon
			+
			\left(
			C\|\Delta(w_\theta-u)\|_0
			+
			|w_\theta-u|_1
			+
			\varepsilon
			\right)^{1/2}
			\right\}.
			\label{eq:btf-pinn-cea}
		\end{equation}
		
	\end{theorem}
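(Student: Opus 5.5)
We will prove a quantitative (coercivity-type) version of the lower bound from the proof of \cref{thm:minimization-property}, and combine it with a Lipschitz upper bound on the functional. For $v\in\mathcal V$ we write, as before, $v-u=z+h$ with $z\in H_0^1(\Omega)$ and $h\in\mathcal H$. The goal is to control both pieces by the excess $\delta(v):=\mathcal J_f^C(v)-\mathcal J_f^C(u)$.

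\emph{Step 1: coercivity.} The chain in the proof of \cref{thm:minimization-property} already gives $\delta(v)\ge (C-C_P)\|\Delta z\|_0$. Combined with $|z|_1\le C_P\|\Delta z\|_0$, this yields $|z|_1\le \tfrac{C_P}{C-C_P}\,\delta(v)$. For $h$ we use the Pythagorean identity $|v|_1^2=|u+z|_1^2+|h|_1^2$, which gives
\[
|h|_1^2=(|v|_1-|u+z|_1)(|v|_1+|u+z|_1).
\]
The first factor satisfies $|v|_1-|u+z|_1\le |v|_1-|u|_1+|z|_1\le \delta(v)+|z|_1$, which is $O(\delta(v))$. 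The second factor is bounded by $2|u|_1+O(\delta(v))$, since $|v|_1\le \mathcal J_f^C(v)=|u|_1+\delta(v)$. Hence
\[
|h|_1\lesssim \delta(v)+\bigl(|u|_1\,\delta(v)\bigr)^{1/2}.
\]
This is exactly where the square-root term in \eqref{eq:btf-pinn-cea} comes from, and the constant $K$ will depend on $C$, $C_P$, $|u|_1$ and the Poincar\'e--Wirtinger constant. Altogether,
\[
|v-u|_1\le |z|_1+|h|_1\lesssim \delta(v)+\delta(v)^{1/2}.
\]

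\emph{Step 2: from the seminorm to the quotient norm.} The Poincar\'e--Wirtinger inequality on the connected Lipschitz domain gives
\[
\inf_{c}\|v-u-c\|_{H^1}\le (1+C_{PW})\,|v-u|_1,
\]
so it suffices to bound $|v-u|_1$.

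\emph{Step 3: upper bound on the excess.} Take $v=v_\theta^\varepsilon$. For any $w_\theta\in\mathcal N_\theta$, $\varepsilon$-optimality gives $\mathcal J_f^C(v_\theta^\varepsilon)\le \mathcal J_f^C(w_\theta)+\varepsilon$. By the triangle inequality and $\Delta u+f=0$,
\[
\mathcal J_f^C(w_\theta)\le C\|\Delta(w_\theta-u)\|_0+|w_\theta-u|_1+|u|_1 .
\]
Subtracting $\mathcal J_f^C(u)=|u|_1$ yields
\[
\delta(v_\theta^\varepsilon)\le C\|\Delta(w_\theta-u)\|_0+|w_\theta-u|_1+\varepsilon.
\]
This uses the tacit assumption $\mathcal N_\theta\subset\mathcal V$, which holds for smooth activations. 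Inserting this bound into Steps 1--2 and taking the infimum over $w_\theta$ gives \eqref{eq:btf-pinn-cea}. Monotonicity of $s\mapsto s+s^{1/2}$ justifies passing the infimum through.

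The main obstacle is the control of the harmonic part $h$ in Step 1. The functional controls $h$ only through the norm $|v|_1$, not a squared quantity, so it is not coercive in $h$ linearly. The factorization above is the natural way to extract the square-root rate. Some care is needed so that the bound $|v|_1\le |u|_1+\delta$ keeps the constant independent of $v$; otherwise $K$ would depend on $v$.
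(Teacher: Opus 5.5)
Your proposal is correct and follows essentially the paper's route: the paper splits the argument into Proposition~\ref{prop:ve-upper-bound} (upper bound on the excess, with the $\varepsilon$ from suboptimality added) and Proposition~\ref{prop:variational-error} (coercivity via $v-u=z+h$, the bound $|z|_1\le \frac{C_P}{C-C_P}\,\delta$, the Pythagorean identity for $h$, and Poincar\'e--Wirtinger), which is exactly your Steps 1--3. Your difference-of-squares factorization $|h|_1^2=(|v|_1-|u+z|_1)(|v|_1+|u+z|_1)$ is slightly cleaner than the paper's step $|u+z|_1^2\ge(|u|_1-|z|_1)^2$, which tacitly needs $|u|_1\ge|z|_1$.
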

	We postpone the proof of Theorem \ref{thm:btf-pinn-cea} until after some technical preparation.

	\begin{remark}[Convexity of the variational problem]
		\label{rem:convexity}
		The minimization problem in
		\ref{thm:minimization-property} is convex at the function-space level.
		Indeed, for any \(v,w\in\mathcal V\) and \(t\in[0,1]\), the triangle
		inequality gives
		\begin{align*}
			\mathcal J_f^C(tv+(1-t)w)
			&=
			C\left\|
			t(\Delta v+f)+(1-t)(\Delta w+f)
			\right\|_0
			+
			\left\|
			t\nabla v+(1-t)\nabla w
			\right\|_0
			\\
			&\le
			Ct\|\Delta v+f\|_0+t\|\nabla v\|_0
			+C(1-t)\|\Delta w+f\|_0+(1-t)\|\nabla w\|_0
			\\
			&=
			t\mathcal J_f^C(v)+(1-t)\mathcal J_f^C(w).
		\end{align*}
		Thus, \(\mathcal J_f^C\) is convex, although generally nonsmooth, on
		\(\mathcal V\).  This statement concerns convexity with respect to the
		function \(v\).  
	\end{remark}


\section{Convergence analysis}

	\begin{proposition}
		\label{prop:ve-upper-bound}
		
		Let \(u\) denote the minimizer of the functional \(\mathcal J_f^C\) over
		\(\mathcal V\).
		Then, for any
		\(v,w\in\mathcal V\) satisfying
		\[
		\mathcal J_f^C(v)\le \mathcal J_f^C(w),
		\]
		we have
		\begin{equation}
			\mathcal J_f^C(v)-\mathcal J_f^C(u)
			\le
			C\|\Delta(w-u)\|_0
			+
			|w-u|_1 .
			\label{eq:ve-upper-bound}
		\end{equation}
		
	\end{proposition}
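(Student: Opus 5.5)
The plan is to chain the hypothesis $\mathcal J_f^C(v)\le\mathcal J_f^C(w)$ with the triangle inequality applied to each of the two terms of $\mathcal J_f^C(w)$, using that $u$ solves \eqref{eq:poisson}. The first step is
\[
\mathcal J_f^C(v)-\mathcal J_f^C(u)\le \mathcal J_f^C(w)-\mathcal J_f^C(u).
\]
It therefore suffices to bound $\mathcal J_f^C(w)-\mathcal J_f^C(u)$ by the right-hand side of \eqref{eq:ve-upper-bound}, and $v$ plays no further role.

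Since $-\Delta u=f$ in $L^2(\Omega)$, the residual of $w$ rewrites as $\Delta w+f=\Delta w-\Delta u=\Delta(w-u)$, so $C\|\Delta w+f\|_0=C\|\Delta(w-u)\|_0$. Likewise $C\|\Delta u+f\|_0=0$, so $\mathcal J_f^C(u)=|u|_1$, as already noted in the proof of \ref{thm:minimization-property}. For the gradient term, the triangle inequality in $L^2(\Omega)^d$ gives $|w|_1\le |u|_1+|w-u|_1$. Combining,
\[
\mathcal J_f^C(w)-\mathcal J_f^C(u)= C\|\Delta(w-u)\|_0+|w|_1-|u|_1\le C\|\Delta(w-u)\|_0+|w-u|_1,
\]
which is \eqref{eq:ve-upper-bound}.

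There is no real obstacle. The one point to check is the identification of ``the minimizer $u$'': by \ref{thm:minimization-property}, minimizers are determined only up to an additive constant. Any choice is admissible, since the value $\mathcal J_f^C(u)$ and the quantities $\Delta(w-u)$ and $|w-u|_1$ are all invariant under $u\mapsto u+c$. Thus the statement is well posed for any representative, in particular for the Dirichlet solution in $\mathcal V\cap H^1_0(\Omega)$, for which $\Delta u+f=0$ holds directly. The assumption $C>C_P$ is needed only to guarantee that $u$ is a minimizer; the inequality itself uses just $\Delta u+f=0$.
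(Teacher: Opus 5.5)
Your proof is correct and is essentially the paper's argument. You use the hypothesis to reduce to bounding $\mathcal J_f^C(w)-\mathcal J_f^C(u)$, use $-\Delta u=f$ to get $\Delta w+f=\Delta(w-u)$ and $\mathcal J_f^C(u)=|u|_1$, and finish with the triangle inequality $|w|_1-|u|_1\le|w-u|_1$. As a minor aside, applying the reverse triangle inequality to the residual term as well shows the bound holds for every $u\in\mathcal V$, so even the identity $\Delta u+f=0$ is not strictly needed.
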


	\begin{proof}
		By the assumption \(\mathcal J_f^C(v)\leq\mathcal J_f^C(w)\), we have
		\[
		\mathcal J_f^C(v)-\mathcal J_f^C(u)
		\leq
		\mathcal J_f^C(w)-\mathcal J_f^C(u).
		\]
		Set \(e=w-u\). Since \(-\Delta u=f\), we have
		\(\Delta w+f=\Delta e\). Therefore,
		\[
		\mathcal J_f^C(w)-\mathcal J_f^C(u)
		=
		C\|\Delta e\|_0+|u+e|_1-|u|_1 .
		\]
		By the triangle inequality,
		\(|u+e|_1-|u|_1\leq |e|_1\), and hence
		\[
		\mathcal J_f^C(w)-\mathcal J_f^C(u)
		\leq
		C\|\Delta e\|_0+|e|_1 .
		\]
		Combining the above estimates yields
		\[
		\mathcal J_f^C(v)-\mathcal J_f^C(u)
		\leq
		C\|\Delta(w-u)\|_0+|w-u|_1 .
		\]
	\end{proof}

	\begin{proposition}
		\label{prop:variational-error}
		
		Assume \(C>C_P\). There exists a constant
		\(K>0\), such that, for any \(v\in\mathcal V\),
		\begin{equation}
			\inf_{c\in\mathbb R}
			\|v-u-c\|_{H^1(\Omega)}
			\leq
			K
			\left(
			\mathcal J_f^C(v)-\mathcal J_f^C(u)
			+
			(\mathcal J_f^C(v)-\mathcal J_f^C(u))^{1/2}
			\right).
			\label{eq:error-control}
		\end{equation}
		
	\end{proposition}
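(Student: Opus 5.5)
We reuse the harmonic decomposition from the proof of \cref{thm:minimization-property}. Fix \(v\in\mathcal V\) and write \(v-u=z+h\) with \(z\in H_0^1(\Omega)\) and \(h\in\mathcal H\). Set \(\delta:=\mathcal J_f^C(v)-\mathcal J_f^C(u)\ge 0\). The aim is to bound \(|z|_1\) linearly in \(\delta\) and \(|h|_1\) by \(\delta+\delta^{1/2}\). We then conclude through a Poincaré--Wirtinger inequality modulo constants: \(\inf_c\|v-u-c\|_{H^1}\le C_{PW}'|v-u|_1\le C_{PW}'(|z|_1+|h|_1)\), which holds on a bounded connected Lipschitz domain.

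\textbf{Step 1 (the \(z\)-part).} The chain in the proof of \cref{thm:minimization-property} actually yields
\[
\mathcal J_f^C(v)\ge C\|\Delta z\|_0+|u+z|_1\ge |u|_1+(C-C_P)\|\Delta z\|_0 .
\]
Hence \(\|\Delta z\|_0\le \delta/(C-C_P)\). Combined with \(|z|_1\le C_P\|\Delta z\|_0\), this gives \(|z|_1\le C_P\delta/(C-C_P)\).

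\textbf{Step 2 (the \(h\)-part, main obstacle).} By orthogonality, \(|v|_1^2=|u+z|_1^2+|h|_1^2\). So
\[
|h|_1^2=(|v|_1-|u+z|_1)(|v|_1+|u+z|_1).
\]
We bound the two factors separately.

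For the first factor, \(|v|_1=\mathcal J_f^C(v)-C\|\Delta z\|_0\) and \(|u+z|_1\ge |u|_1-|z|_1\ge |u|_1-C_P\|\Delta z\|_0\). Therefore
\[
|v|_1-|u+z|_1\le \delta-(C-C_P)\|\Delta z\|_0\le\delta .
\]

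For the second factor, \(|v|_1\le \mathcal J_f^C(v)=|u|_1+\delta\) and \(|u+z|_1\le |v|_1\). So \(|v|_1+|u+z|_1\le 2(|u|_1+\delta)\).

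Thus \(|h|_1^2\le 2\delta(|u|_1+\delta)\), which gives
\[
|h|_1\le \sqrt{2|u|_1}\,\delta^{1/2}+\sqrt2\,\delta .
\]
This is where the square root in the estimate originates. It is unavoidable, since the functional is only first-order sensitive to \(h\) through the Pythagorean identity. The delicate point is keeping both factors controlled without losing a sign. If \(|u|_1=0\), the bound degenerates harmlessly to a linear one.

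\textbf{Step 3 (assembly).} Combining Steps 1 and 2,
\[
|v-u|_1\le \Bigl(\tfrac{C_P}{C-C_P}+\sqrt2\Bigr)\delta+\sqrt{2|u|_1}\,\delta^{1/2}.
\]
Choosing \(c\) as the mean of \(v-u\) and applying the Poincaré--Wirtinger inequality \(\|w-\bar w\|_0\le C_{PW}|w|_1\) gives \eqref{eq:error-control} with
\[
K=(1+C_{PW}^2)^{1/2}\max\Bigl\{\tfrac{C_P}{C-C_P}+\sqrt2,\ \sqrt{2|u|_1}\Bigr\}.
\]
This \(K\) depends on \(\Omega\), on \(C\), and on \(|u|_1\), i.e.\ on \(f\).
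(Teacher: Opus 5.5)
Your proof is correct and follows the paper's own argument. It uses the same splitting $v-u=z+h$, the same linear bound $|z|_1\le C_P\delta/(C-C_P)$ obtained from $\delta\ge(C-C_P)\|\Delta z\|_0$, orthogonality to control $|h|_1^2$, and Poincar\'e--Wirtinger to finish. The only difference is cosmetic: you factor $|h|_1^2=(|v|_1-|u+z|_1)(|v|_1+|u+z|_1)$ and bound the first factor directly by $\delta$, giving $|h|_1^2\le 2\delta(|u|_1+\delta)$ with no dependence on $C-C_P$, whereas the paper expands $(|u|_1+\delta)^2-(|u|_1-|z|_1)^2$ and then reinserts the bound on $|z|_1$.
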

	
\begin{proof}
	
	Let
	\[
	v-u=z+h,
	\]
	where \(z\in H_0^1(\Omega)\) and \(h\in\mathcal H\) is the weakly harmonic
	component of \(v-u\). Since \(h\in\mathcal H\), we have
	\(\Delta v+f=\Delta z\) in the distributional sense. Hence,
	\[
	\mathcal J_f^C(v)-\mathcal J_f^C(u)
	=
	C\|\Delta z\|_0+|v|_1-|u|_1 .
	\]
	
	The orthogonality of the harmonic component gives
	\[
	|v|_1^2=|u+z|_1^2+|h|_1^2,
	\]
	and therefore
	\[
	|v|_1\geq |u+z|_1\geq |u|_1-|z|_1 .
	\]
	Consequently,
	\[
	\mathcal J_f^C(v)-\mathcal J_f^C(u)
	\geq
	C\|\Delta z\|_0-|z|_1
	\geq
	(C-C_P)\|\Delta z\|_0 .
	\]
	It follows that
	\[
	\|\Delta z\|_0
	\leq
	\frac{\mathcal J_f^C(v)-\mathcal J_f^C(u)}{C-C_P},
	\]
	and
	\[
	|z|_1
	\leq
	\frac{C_P}{C-C_P}
	(\mathcal J_f^C(v)-\mathcal J_f^C(u)).
	\]
	
	It remains to estimate the harmonic component. Since
	\(|v|_1\leq |u|_1+\mathcal J_f^C(v)-\mathcal J_f^C(u)\), we obtain
	\[
	\begin{aligned}
		|h|_1^2
		&=
		|v|_1^2-|u+z|_1^2\\
		&\leq
		\left(|u|_1+\mathcal J_f^C(v)-\mathcal J_f^C(u)\right)^2
		-\left(|u|_1-|z|_1\right)^2\\
		&\leq
		2|u|_1
		\left(
		\mathcal J_f^C(v)-\mathcal J_f^C(u)+|z|_1
		\right)
		+
		(\mathcal J_f^C(v)-\mathcal J_f^C(u))^2 .
	\end{aligned}
	\]
	Using the estimate for \(|z|_1\), we have
	\[
	|h|_1^2
	\leq
	K_1
	\left(
	\mathcal J_f^C(v)-\mathcal J_f^C(u)
	+
	(\mathcal J_f^C(v)-\mathcal J_f^C(u))^2
	\right).
	\]
	Hence,
	\[
	|h|_1
	\leq
	K_2
	\left(
	(\mathcal J_f^C(v)-\mathcal J_f^C(u))^{1/2}
	+
	\mathcal J_f^C(v)-\mathcal J_f^C(u)
	\right).
	\]
	
	Combining the estimates for \(z\) and \(h\) gives
	\[
	|z|_1+|h|_1
	\leq
	K
	\left(
	\mathcal J_f^C(v)-\mathcal J_f^C(u)
	+
	(\mathcal J_f^C(v)-\mathcal J_f^C(u))^{1/2}
	\right).
	\]
	Since \(v-u=z+h\), the Poincar\'e--Wirtinger inequality yields
	\[
	\inf_{c\in\mathbb R}\|v-u-c\|_{H^1(\Omega)}
	\leq
	K
	\left(
	\mathcal J_f^C(v)-\mathcal J_f^C(u)
	+
	(\mathcal J_f^C(v)-\mathcal J_f^C(u))^{1/2}
	\right).
	\]
	
\end{proof}

	\begin{proposition}
		\label{prop:cea-error-estimate}
		Assume \(C>C_P\). Let \(u\) denote the minimizer of the functional
		\(\mathcal J_f^C\) over \(\mathcal V\).
		There exists a constant \(K>0\), such that, for any \(v,w\in\mathcal V\)
		satisfying
		\[
		\mathcal J_f^C(v)\le \mathcal J_f^C(w),
		\]
		\begin{equation}
			\inf_{c\in\mathbb R}
			\|v-u-c\|_{H^1(\Omega)}
			\le
			K
			\left(
			C\|\Delta(w-u)\|_0
			+
			|w-u|_1
			+
			\left(
			C\|\Delta(w-u)\|_0
			+
			|w-u|_1
			\right)^{1/2}
			\right).
			\label{eq:cea-error-estimate}
		\end{equation}
		
	\end{proposition}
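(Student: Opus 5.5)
The plan is to chain the two preceding results: Proposition~\ref{prop:ve-upper-bound} turns the assumption \(\mathcal J_f^C(v)\le\mathcal J_f^C(w)\) into a bound on the energy gap, and Proposition~\ref{prop:variational-error} turns the energy gap into an \(H^1\)-error modulo constants. Write
\[
\delta:=\mathcal J_f^C(v)-\mathcal J_f^C(u),\qquad
\eta:=C\|\Delta(w-u)\|_0+|w-u|_1 .
\]

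\textbf{Step 1 (sign of the gap).} Since \(C>C_P\), Theorem~\ref{thm:minimization-property} shows that \(u\) is a global minimizer of \(\mathcal J_f^C\) over \(\mathcal V\). Hence \(\delta\ge 0\), so the square root \(\delta^{1/2}\) is well defined.

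\textbf{Step 2 (bound the gap).} Apply Proposition~\ref{prop:ve-upper-bound} to the pair \(v,w\). Because \(\mathcal J_f^C(v)\le\mathcal J_f^C(w)\), this gives directly \(\delta\le\eta\).

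\textbf{Step 3 (bound the error).} Apply Proposition~\ref{prop:variational-error} to \(v\):
\[
\inf_{c\in\mathbb R}\|v-u-c\|_{H^1(\Omega)}\le K(\delta+\delta^{1/2}).
\]

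\textbf{Step 4 (monotonicity).} The map \(s\mapsto s+s^{1/2}\) is nondecreasing on \([0,\infty)\). Combining this with \(0\le\delta\le\eta\) gives \(\delta+\delta^{1/2}\le\eta+\eta^{1/2}\), which is \eqref{eq:cea-error-estimate} with the same constant \(K\).

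The only point needing care is that \(K\) must be independent of \(v\) and \(w\). In the proof of Proposition~\ref{prop:variational-error}, \(K\) depends only on \(C\), \(C_P\), \(|u|_1\), and the Poincar\'e--Wirtinger constant of \(\Omega\), so this holds.

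I expect no real obstacle. The one subtlety is the nonnegativity of \(\delta\) from Step~1: it is needed both to make sense of \(\delta^{1/2}\) and to apply the monotonicity in Step~4.
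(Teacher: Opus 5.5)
Your proposal is correct and matches the paper's proof, which likewise bounds the gap $\mathcal J_f^C(v)-\mathcal J_f^C(u)$ by $C\|\Delta(w-u)\|_0+|w-u|_1$ via Proposition~\ref{prop:ve-upper-bound} and substitutes this into the estimate of Proposition~\ref{prop:variational-error}. Your explicit use of nonnegativity of the gap (from Theorem~\ref{thm:minimization-property}), of the monotonicity of $s\mapsto s+s^{1/2}$, and of the independence of $K$ from $v,w$ makes rigorous the substitution step the paper performs without comment.
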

	
	\begin{proof}
		By Proposition ~\ref{prop:variational-error}, we have
		\[
		\inf_{c\in\mathbb R}
		\|v-u-c\|_{H^1(\Omega)}
		\leq
		K
		\left(
		\mathcal J_f^C(v)-\mathcal J_f^C(u)
		+
		(\mathcal J_f^C(v)-\mathcal J_f^C(u))^{1/2}
		\right).
		\]
		
		Since \(\mathcal J_f^C(v)\leq\mathcal J_f^C(w)\), 
		Proposition \ref{prop:ve-upper-bound} yields
		\[
		\mathcal J_f^C(v)-\mathcal J_f^C(u)
		\leq
		C\|\Delta(w-u)\|_0+|w-u|_1 .
		\]
		
		Substituting this estimate into the above inequality gives
		\[
		\inf_{c\in\mathbb R}
		\|v-u-c\|_{H^1(\Omega)}
		\leq
		K
		\left(
		C\|\Delta(w-u)\|_0
		+
		|w-u|_1
		+
		\left(
		C\|\Delta(w-u)\|_0
		+
		|w-u|_1
		\right)^{1/2}
		\right).
		\]
	\end{proof}
	
	\begin{remark}
		Propositions \ref{prop:ve-upper-bound}, \ref{prop:variational-error} and \ref{prop:cea-error-estimate} establish, respectively, the boundedness, coercivity, and quasi-monotonicity of $\mathcal J_f^C$. In particular, Proposition \ref{prop:cea-error-estimate} shows that if \(\mathcal J_f^C(v)-\mathcal J_f^C(u)\le \mathcal J_f^C(w)-\mathcal J_f^C(u)\), then the $H^1$-error of $v-u$ (modulo constants) is controlled by a combination of $\|\Delta(w-u)\|_0$ and $|w-u|_1$; we refer to this property as quasi-monotonicity.
	\end{remark}
	
	\paragraph{Proof of Theorem \ref{thm:btf-pinn-cea}}

	Using \ref{prop:ve-upper-bound} with
	\(v=v_\theta^\varepsilon\) and \(w=w_\theta\), together with the definition of
	\(v_\theta^\varepsilon\), and combining this estimate with \ref{prop:variational-error}, we obtain, for any
	\(w_\theta\in\mathcal N_\theta\),
	\[
	\begin{aligned}
		\inf_{c\in\mathbb R}
		\|v_\theta^\varepsilon-u-c\|_{H^1(\Omega)}
		\leq
		K\Bigg\{
		C\|\Delta(w_\theta-u)\|_0
		+
		|w_\theta-u|_1
		+
		\varepsilon
		+
		\left(
		C\|\Delta(w_\theta-u)\|_0
		+
		|w_\theta-u|_1
		+
		\varepsilon
		\right)^{1/2}
		\Bigg\}.
	\end{aligned}
	\]
	Taking the infimum over \(w_\theta\in\mathcal N_\theta\) yields
	\eqref{eq:btf-pinn-cea}.
	\FloatBarrier

\section{Numerical experiments}
	\label{sec:numerics}
	
	In this section, we use fully connected neural networks with the sine activation function to illustrate the validity of BTF-PINN. The experiments are divided into two parts.  We first demonstrate the fundamental properties of BTF-PINN, including its approximation accuracy, the threshold behavior of the residual weight, and the comparison with conventional boundary-penalty PINNs.  We then investigate the applicability of BTF-PINN to more general settings, including irregular domains, high-dimensional problems, matrix-coefficient elliptic equations, and interface problems.
	
	\subsection{Implementation and training protocol}
	\label{sec:experimental-setup}
	
	In all experiments, the BTF-PINN objective defined in
	Section~\ref{sec:btf-pinn} is evaluated by empirical quadrature.  Given
	sets of interior quadrature points \(\{x_i,w_i\}_{i=1}^{N_Q}\subset\Omega\), we define
	\[
	\|r\|_{L^2(\Omega),Q}
	=
	\left(
	\sum_{i=1}^{N_Q}w_i|r(x_i)|^2
	\right)^{1/2}.
	\]
	The corresponding practical training objective is
	\begin{equation}
		\mathcal J_{f,Q}^C(v_\theta)
		=
		C\|\Delta v_\theta+f\|_{L^2(\Omega),Q}
		+
		\|\nabla v_\theta\|_{L^2(\Omega),Q}.
		\label{eq:numerical-loss}
	\end{equation}
	The network remains unconstrained on \(\partial\Omega\), and \(C\) is the only
	additional weight introduced in the objective.
	
	In the implementation, the two unsquared norms in
	\eqref{eq:numerical-loss} are evaluated as stabilized square roots of
	quadrature averages:
	\[
	\|r\|_{L^2(\Omega),Q}
	=
	\left(
	\sum_{i=1}^{N_Q}w_i|r(x_i)|^2+\varepsilon_{\rm n}
	\right)^{1/2}.
	\]
	The parameter	$\varepsilon_{\rm n}=10^{-12}$ is introduced solely to avoid numerical singularities when the residual becomes
	very small.  It is several orders of magnitude below the reported losses and
	therefore does not affect the numerical results.
	
	The computations are performed in single precision using PyTorch
	with CUDA acceleration on an NVIDIA GeForce RTX 5060 GPU.
	
	Boundary points do not enter the BTF-PINN training objective.  They are used
	only for posterior diagnostics and, in Section~\ref{sec:trace-compare}, for
	the boundary-penalty PINN baseline.  For the hybrid interface problem,
	points on the internal interface are used to impose the transmission
	conditions, while the outer Dirichlet boundary remains absent from the
	training loss.
	
	The main network architectures and training configurations are summarized
	in Table~\ref{tab:training-configurations}. Here \(D\times W\) denotes a
	fully connected neural network with \(D\) hidden layers and width \(W\).
	Depending on the problem, optimization is performed either by an Adam-type
	method or by an Adam stage followed by L-BFGS refinement. Here, AdamW denotes
	the Adam optimizer with decoupled weight decay.
	
	\begin{table}[!htbp]
		\caption{Network architectures and principal traini configurations used in the numerical experiments. }
		\label{tab:training-configurations}
		\resizebox{\textwidth}{!}{%
			\begin{tabular}{lcccc}
				\toprule
				experiment
				& network
				& activation
				& interior sampling
				& optimizer \\
				\midrule
				
				3-D multi-frequency Poisson
				& \(4\times96\)
				& sine
				& Gauss--Legendre, \(64^3\) points
				& Adam + L-BFGS \\
				
				10-D Poisson
				& \(4\times128\)
				& sine
				& factorized Gauss, up to \(262144\) points
				& Adam + L-BFGS \\
				
				2-D irregular-domain Poisson
				& \(4\times96\)
				& sine
				& random interior batches of size \(32768\)
				& Adam \\
				
				3-D irregular-domain Poisson
				& \(4\times128\)
				& sine
				& random interior batches of size \(8192\)
				& Adam \\
				
				2-D matrix-coefficient heat equation
				& \(4\times96\)
				& sine
				& random interior batches of size \(8192\)
				& AdamW \\
				
				3-D matrix-coefficient heat equation
				& \(4\times96\)
				& sine
				& random interior batches of size \(4096\)
				& AdamW \\
				
				4-D interface problem
				& two \(4\times96\) subnetworks
				& sine
				& interior and interface samples
				& AdamW \\
				
				residual-weight sweep
				& \(4\times128\)
				& sine
				& random batches of size \(8192\)
				& Adam \\
				\bottomrule
			\end{tabular}%
		}
	\end{table}

	\subsection{Basic numerical examples of BTF-PINN}
	\paragraph{Multi-frequency solution in three dimensions}
	\label{sec:3d-multi}
	
	We first consider a three-dimensional nonseparable manufactured solution with
	frequencies up to five:
	\begin{align}
		u(x_1,x_2,x_3)
		={}&
		\sin(\pi x_1)\sin(\pi x_2)\sin(\pi x_3)                              
		+0.55\sin(2\pi x_1)\sin(3\pi x_2)\sin(\pi x_3)                       \\
		&-0.40\sin(3\pi x_1)\sin(\pi x_2)\sin(4\pi x_3)                    
		+0.30\sin(4\pi x_1)\sin(2\pi x_2)\sin(3\pi x_3)                     \\
		&-0.20\sin(5\pi x_1)\sin(4\pi x_2)\sin(2\pi x_3).
		\label{eq:3d-multifrequency-solution}
	\end{align}
	The right-hand side is computed analytically as \(f=-\Delta u\).  For \(d=3\),
	\[
	C_\ast=(\pi\sqrt3)^{-1}=0.183776,
	\]
	and we use
	\[
	C=0.24=1.306\,C_\ast .
	\]
	
	Since the functional is invariant under additive constants, the solution
	error is evaluated after removing the constant mode. We report the shifted
	relative \(L^2\) error
	\begin{equation}
		E_{L^2}^{\rm sh}
		=
		\inf_{c\in\mathbb R}
		\frac{\|v_\theta-u-c\|_{L^2(\Omega)}}
		{\|u\|_{L^2(\Omega)}} ,
	\end{equation}
	and the relative \(H^1\)-seminorm error
	\begin{equation}
		E_{H^1}
		=
		\frac{\|\nabla(v_\theta-u)\|_{L^2(\Omega)}}
		{|u|_1}.
	\end{equation}

	\begin{table}[!htbp]
		\centering
		\caption{Three-dimensional multi-frequency manufactured solution.}
		\label{tab:3d-multifrequency}
		\begin{tabular}{l c c c}
			\toprule
			method & \(C\) & \(E_{H^1}\) & \(E_{L^2}^{\rm sh}\) \\
			\midrule
			\(\BTFPINN\)
			& \(0.24\)
			& \(2.532\times10^{-2}\)
			& \(1.117\times10^{-2}\) \\
			\bottomrule
		\end{tabular}
	\end{table}
	
	Table~\ref{tab:3d-multifrequency} shows that the proposed interior loss
	approximates the multi-frequency field with a relative \(H^1\)-seminorm error
	of \(2.53\%\) and a shifted relative \(L^2\) error of \(1.12\%\).
	
	Figure~\ref{fig:3d-loss-curves} shows the training history.  The shifted
	interior errors decrease during Adam training and are further reduced by
	L-BFGS; the posterior trace diagnostic follows the same trend.
	
	\begin{figure}[!htbp]
		\centering
		\IfFileExists{3D_loss.png}{%
			\includegraphics[height=0.50\textheight,keepaspectratio]{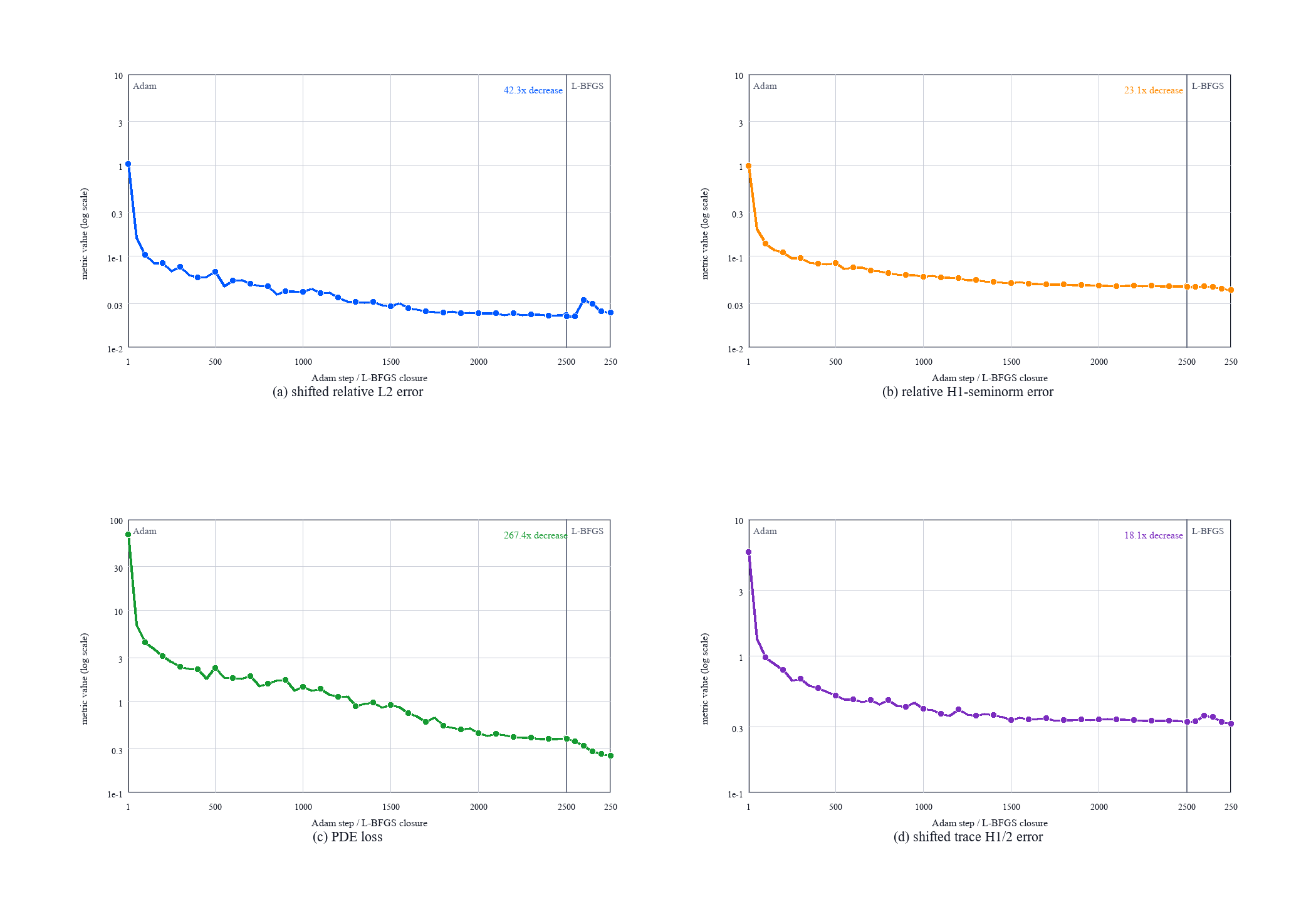}%
		}{%
			\fbox{\parbox{0.70\textwidth}{\centering Missing figure file:
					\texttt{3D\_loss.png}.}}%
		}
		\caption{Training history for the three-dimensional multi-frequency example.
			The panels show the shifted relative \(L^2\) error, the relative
			\(H^1\)-seminorm error, the PDE loss, and the shifted trace
			\(H^{1/2}\)-type error.}
		\label{fig:3d-loss-curves}
	\end{figure}
	
	\begin{figure}[!htbp]
		\centering
		\IfFileExists{fig_complex3d_multifreq_2607_high_contrast_stacks.pdf}{%
			\includegraphics[height=0.4\textheight,keepaspectratio]{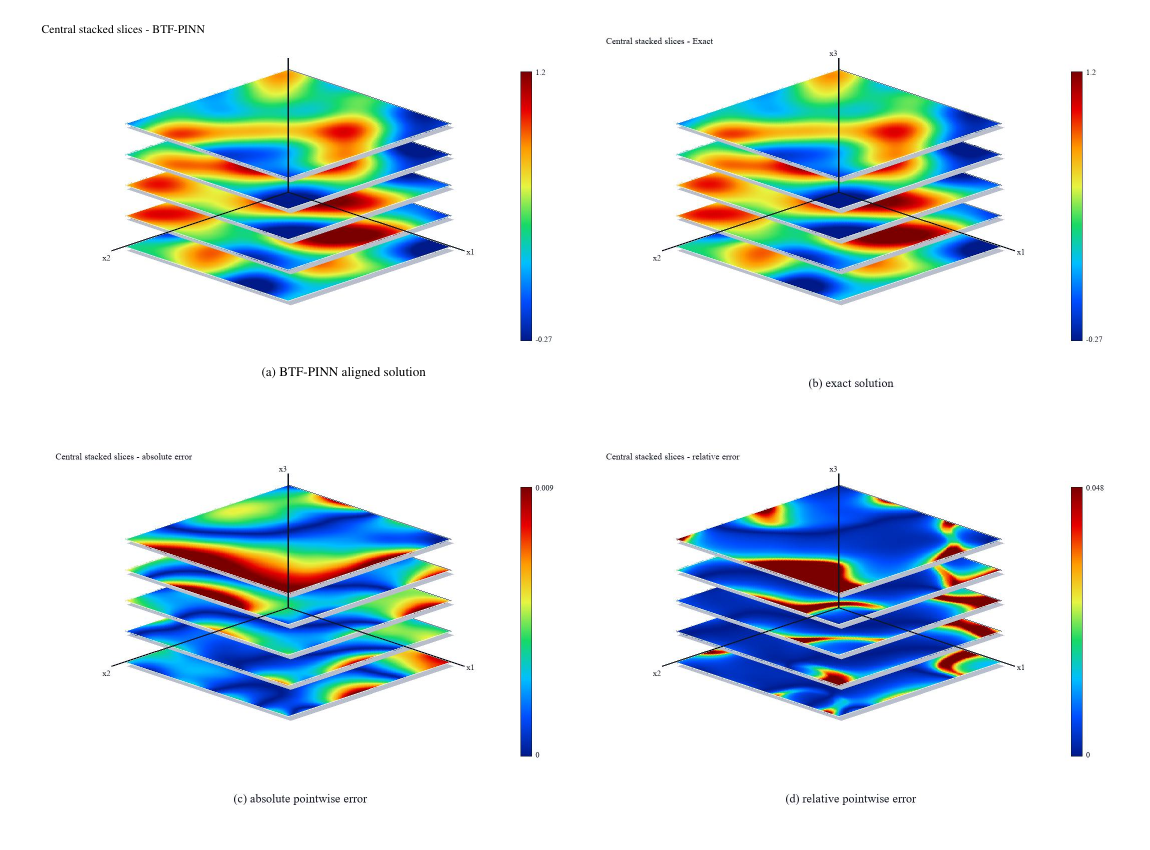}%
		}{%
			\fbox{\parbox{0.70\textwidth}{\centering Missing figure file:
					\texttt{fig\_complex3d\_multifreq\_2607\_high\_contrast\_stacks.pdf}.}}%
		}
		\caption{Stacked-slice diagnostics for the three-dimensional multi-frequency
			example.  The panels compare the aligned BTF-PINN prediction, the exact
			solution, the absolute pointwise error, and the pointwise relative error.}
		\label{fig:complex3d-stacked-slices}
	\end{figure}
	
	\begin{figure}[!htbp]
		\centering
		\IfFileExists{fig_3d_multifrequency_heatmap_grid.pdf}{%
			\includegraphics[height=0.4\textheight,keepaspectratio]{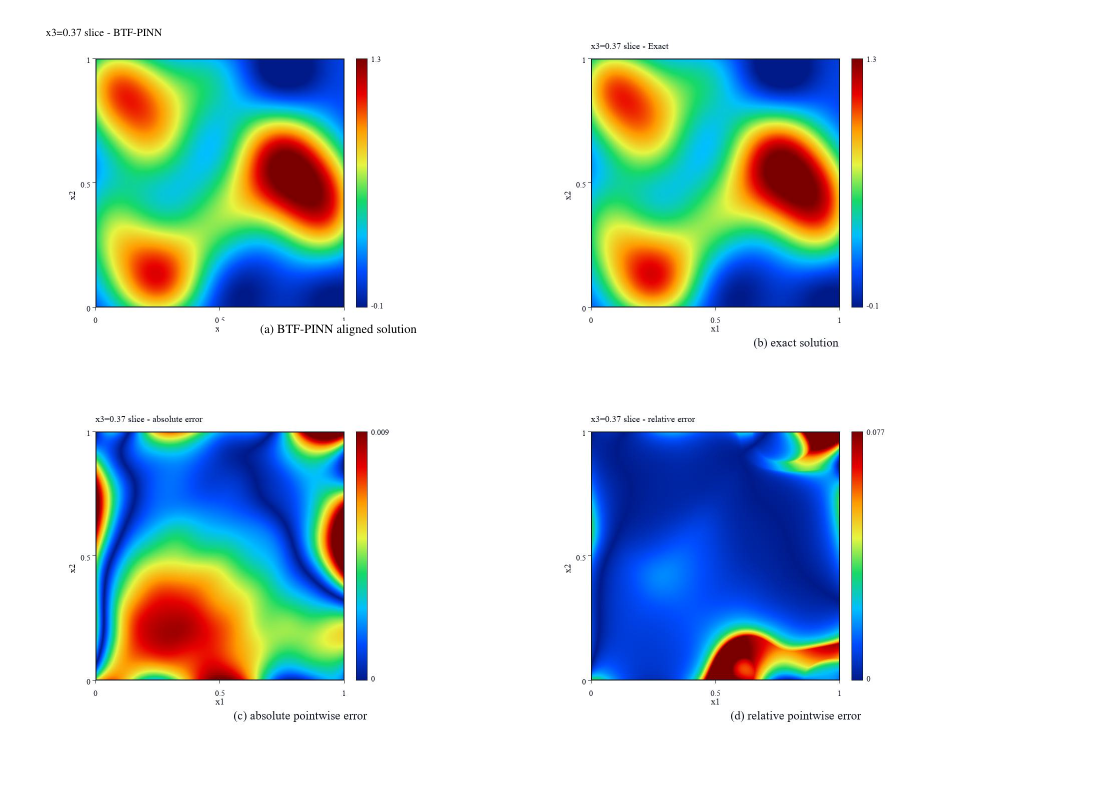}%
		}{%
			\fbox{\parbox{0.70\textwidth}{\centering Missing figure file:
					\texttt{fig\_3d\_multifrequency\_heatmap\_grid.pdf}.}}%
		}
		\caption{Heatmap diagnostics for the three-dimensional multi-frequency
			example.  The panels compare the aligned BTF-PINN prediction, the exact
			solution, the absolute pointwise error, and the pointwise relative error.}
		\label{fig:3d-heatmap-diagnostics}
	\end{figure}
	
	Figures~\ref{fig:complex3d-stacked-slices} and
	\ref{fig:3d-heatmap-diagnostics} provide complementary slice diagnostics for
	the nonseparable oscillatory field.  The errors remain localized across the
	displayed sections.
	
	\paragraph{Residual-weight threshold behavior}
	\label{sec:threshold}
	
	We next examine the effect of the residual weight \(C\).  The continuous
	theory identifies the Dirichlet Poincar\'e constant as the relevant threshold, and
	the strict condition \(C>C_P\) yields the equivalence in
	\ref{thm:minimization-property}.  On the unit cube this threshold is
	\(C_\ast=(\pi\sqrt d)^{-1}\).  We therefore test several values of \(C\) relative
	to \(C_\ast\) on the normalized product mode
	\[
	u(x)=2^{d/2}\prod_{j=1}^d\sin(\pi x_j),
	\qquad
	f(x)=d\pi^2u(x),
	\]
	in dimensions \(d=2,4,6,10\).  For every tested dimension we use the same protocol and take
	\[
	C/C_\ast\in\{0.8,\ 1.0,\ 1.2,\ 1.5,\ 2.0\},
	\qquad
	C_\ast=(\pi\sqrt d)^{-1},
	\]
	with three independent random seeds at each setting.
	
	A run is regarded as successful when the shifted relative $L^2$ error
	$E_{L^2}^{\rm sh}$ satisfies $E_{L^2}^{\rm sh}<0.2$.
	This criterion is used only for the success count; the errors themselves are
	reported in Table~\ref{tab:unified-threshold-sweep}.
	
	\begin{table}[!htbp]
		\centering
		\caption{Effect of the residual weight across dimensions.  A run is counted as
			successful if \(E_{L^2}^{\rm sh}<0.2\).  Entries are mean \(\pm\)
			standard deviation over three independent seeds.}
		\label{tab:unified-threshold-sweep}
		\scriptsize
		\setlength{\tabcolsep}{4pt}
		\renewcommand{\arraystretch}{0.92}
		\begin{tabular}{c c c c}
			\toprule
			\(d\) & \(C/C_\ast\) & \(E_{L^2}^{\rm sh}\) & success \\
			\midrule
			\(2\)  & \(0.8\) & \(5.938\times10^{-1}\pm1.452\times10^{-2}\) & \(0/3\) \\
			\(2\)  & \(1.0\) & \(6.067\times10^{-1}\pm8.980\times10^{-4}\) & \(0/3\) \\
			\(2\)  & \(1.2\) & \(2.078\times10^{-1}\pm3.531\times10^{-1}\) & \(2/3\) \\
			\(2\)  & \(1.5\) & \(5.232\times10^{-3}\pm8.655\times10^{-4}\) & \(3/3\) \\
			\(2\)  & \(2.0\) & \(8.423\times10^{-3}\pm1.477\times10^{-3}\) & \(3/3\) \\
			\midrule
			\(4\)  & \(0.8\) & \(7.535\times10^{-1}\pm7.542\times10^{-5}\) & \(0/3\) \\
			\(4\)  & \(1.0\) & \(7.059\times10^{-1}\pm6.617\times10^{-3}\) & \(0/3\) \\
			\(4\)  & \(1.2\) & \(3.394\times10^{-2}\pm1.189\times10^{-3}\) & \(3/3\) \\
			\(4\)  & \(1.5\) & \(3.799\times10^{-2}\pm1.458\times10^{-3}\) & \(3/3\) \\
			\(4\)  & \(2.0\) & \(4.208\times10^{-2}\pm4.854\times10^{-4}\) & \(3/3\) \\
			\midrule
			\(6\)  & \(0.8\) & \(8.459\times10^{-1}\pm1.027\times10^{-4}\) & \(0/3\) \\
			\(6\)  & \(1.0\) & \(8.433\times10^{-1}\pm4.134\times10^{-4}\) & \(0/3\) \\
			\(6\)  & \(1.2\) & \(6.414\times10^{-2}\pm1.184\times10^{-3}\) & \(3/3\) \\
			\(6\)  & \(1.5\) & \(6.962\times10^{-2}\pm1.083\times10^{-3}\) & \(3/3\) \\
			\(6\)  & \(2.0\) & \(7.429\times10^{-2}\pm6.237\times10^{-4}\) & \(3/3\) \\
			\midrule
			\(10\) & \(0.8\) & \(9.370\times10^{-1}\pm2.830\times10^{-4}\) & \(0/3\) \\
			\(10\) & \(1.0\) & \(9.369\times10^{-1}\pm2.484\times10^{-4}\) & \(0/3\) \\
			\(10\) & \(1.2\) & \(9.366\times10^{-1}\pm1.622\times10^{-4}\) & \(0/3\) \\
			\(10\) & \(1.5\) & \(1.592\times10^{-1}\pm1.454\times10^{-2}\) & \(3/3\) \\
			\(10\) & \(2.0\) & \(1.469\times10^{-1}\pm3.155\times10^{-3}\) & \(3/3\) \\
			\bottomrule
		\end{tabular}
	\end{table}
	
	Table~\ref{tab:unified-threshold-sweep} shows a clear dependence on the
	residual weight.  For \(C<C_\ast\), none of the tested runs reaches the
	prescribed accuracy in any dimension, and the shifted relative \(L^2\)-error
	remains large.  The borderline value \(C=C_\ast\) also does not achieve the
	target accuracy throughout the sweep.  This agrees with the fact that the
	equivalence theorem requires a strict margin above the threshold, while the
	trained neural model is affected by finite approximation, quadrature error,
	random initialization, and nonconvex optimization.
	
	When \(C\) is taken above \(C_\ast\), the shifted errors decrease substantially.
	In dimensions four and six, all three seeds already succeed at \(1.2C_\ast\).
	In dimension two, \(1.2C_\ast\) gives a mixed outcome: two seeds reach the
	prescribed accuracy range, while one remains outside, leading to the larger
	standard deviation reported in the table.  At \(1.5C_\ast\) and \(2C_\ast\),
	all dimension-two runs achieve the target accuracy with small shifted errors.
	The ten-dimensional case requires a larger practical margin:
	\(0.8C_\ast\), \(C_\ast\), and \(1.2C_\ast\) do not reach the target accuracy,
	whereas \(1.5C_\ast\) and \(2C_\ast\) achieve successful convergence for all
	three seeds.  This behavior can be explained by the additional numerical errors arising from neural approximation, empirical quadrature, and nonconvex optimization.
	As the dimension increases, these effects may become more pronounced, and a
	larger practical margin above \(C_\ast\) may therefore be required to obtain
	reliable convergence.
	
	The sweep examines whether the scale predicted by the continuous minimization
	result is reflected in the trained neural model.  The results indicate that
	\(C_\ast\) serves as a useful reference scale: below this value, the method does
	not reach the target accuracy in the tested cases, while above it the shifted
	error enters the desired range once a practical margin is introduced.  The
	required margin depends on dimension, approximation quality, quadrature,
	optimization, and the relative weighting of the residual and energy terms.
	
	\paragraph{Comparison with boundary-penalty PINNs}
	\label{sec:trace-compare}
	
	Although boundary information is not used in the BTF-PINN optimization, the
	resulting neural solution induces a posterior boundary trace.  We compare this
	boundary behavior with that of a standard boundary-penalty PINN.
	
	For the baseline method, we employ the objective
	\begin{equation}
		\mathcal L_{f}^{\beta}(\theta)
		=
		\|\Delta v_\theta+f\|_{L^2(\Omega)}^2
		+
		\beta
		\|\gamma_0v_\theta\|_{L^2(\partial\Omega)}^2 .
		\label{eq:boundary-penalty-pinn-loss}
	\end{equation}
	where the Dirichlet condition is imposed through sampled boundary values.
	The parameter \(\beta\) is selected from the discrete set
	\[
	\{0.01,\,0.1,\,1,\,10, \,100\}.
	\]
	The boundary-penalty PINN results reported in Table~\ref{tab:trace-detailed}
	correspond to the best-performing value among these candidates.

	For homogeneous Dirichlet problems, the exact boundary trace satisfies
	\(\gamma_0u=0\).  After training, we evaluate the recovered boundary trace
	\[
	g_\theta=\gamma_0v_\theta .
	\]
	Since the interior functional is invariant under additive constants, the
	boundary diagnostics are evaluated after removing the constant mode.  We define
	the shifted boundary \(L^2\) and \(H^{1/2}\) trace errors as
	\begin{equation}
		B_{L^2}^{\rm sh}
		=
		\inf_{c\in\mathbb R}
		\|g_\theta-c\|_{L^2(\partial\Omega)}
		\label{eq:boundary-shifted-l2}
	\end{equation}
	and
	\begin{equation}
		B_{1/2}^{\rm sh}
		=
		\inf_{c\in\mathbb R}
		\|g_\theta-c\|_{H^{1/2}(\partial\Omega)} .
		\label{eq:boundary-hhalf-shifted}
	\end{equation}
	These boundary quantities are evaluated only after training and do not enter
	the BTF-PINN optimization.
	
	The \(H^{1/2}(\partial\Omega)\) norm in
	\eqref{eq:boundary-hhalf-shifted} is computed through the Slobodeckij
	representation \cite{mclean2000strongly},
	\[
	\|g\|_{H^{1/2}(\partial\Omega)}^2
	=
	\|g\|_{L^2(\partial\Omega)}^2
	+
	\int_{\partial\Omega}\int_{\partial\Omega}
	\frac{|g(x)-g(y)|^2}{|x-y|^{d}}
	\,ds_x\,ds_y ,
	\]
	using boundary quadrature points independent of the interior training points.
	The same boundary quadrature set is used for both BTF-PINN and the
	boundary-penalty PINN baseline.

	\begin{table}[!htbp]
		\centering
		\caption{Shifted boundary trace diagnostics.  The shifted quantities remove
			the additive constant that cannot be identified by the interior objective.}
		\label{tab:trace-detailed}
		\begin{tabular}{c c c c}
			\toprule
			\(d\) & method & \(B_{L^2}^{\rm sh}\) & \(B_{1/2}^{\rm sh}\) \\
			\midrule
			\(2\) & \(\BTFPINN\) & \(9.628\times10^{-4}\) & \(1.165\times10^{-2}\) \\
			\(2\) & boundary-penalty PINN & \(1.231\times10^{-3}\) & \(1.479\times10^{-2}\) \\
			\midrule
			\(3\) & \(\BTFPINN\) & \(2.217\times10^{-3}\) & \(4.635\times10^{-2}\) \\
			\(3\) & boundary-penalty PINN & \(1.063\times10^{-2}\) & \(2.201\times10^{-1}\) \\
			\midrule
			\(4\) & \(\BTFPINN\) & \(4.670\times10^{-3}\) & \(1.023\times10^{-1}\) \\
			\(4\) & boundary-penalty PINN & \(1.716\times10^{-2}\) & \(3.462\times10^{-1}\) \\
			\midrule
			\(5\) & \(\BTFPINN\) & \(6.152\times10^{-3}\) & \(1.744\times10^{-1}\) \\
			\(5\) & boundary-penalty PINN & \(1.941\times10^{-2}\) & \(4.453\times10^{-1}\) \\
			\bottomrule
		\end{tabular}
	\end{table}

	\begin{figure}[!htbp]
		\centering
		\IfFileExists{fig_trace_hhalf_comparison.pdf}{%
			\includegraphics[width=0.58\textwidth]{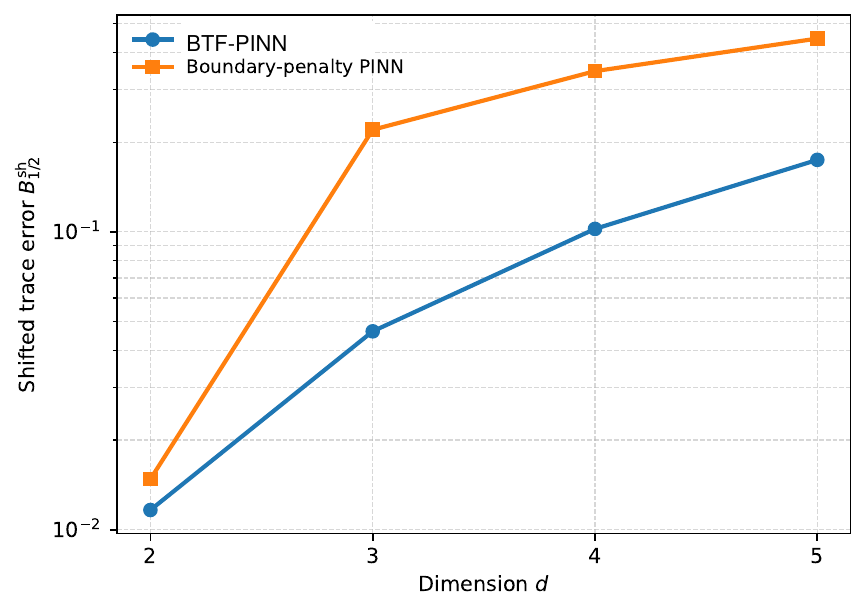}%
		}{%
			\fbox{\parbox{0.58\textwidth}{\centering Missing figure file:
					\texttt{fig\_trace\_hhalf\_comparison.pdf}.}}%
		}
		\caption{Dimension-wise comparison of shifted \(H^{1/2}\)-type boundary
			trace errors. Both methods are evaluated after removing the additive
			constant.}
		\label{fig:trace-hhalf-comparison}
	\end{figure}

	Table~\ref{tab:trace-detailed} and
	Figure~\ref{fig:trace-hhalf-comparison} show smaller shifted
	\(H^{1/2}\)-type trace errors for BTF-PINN in all tested dimensions.  In
	\(d=3\), for example, the error decreases from
	\(2.201\times10^{-1}\) to \(4.635\times10^{-2}\).  This behavior is consistent
	with the role of the interior Dirichlet energy in suppressing the nonconstant
	harmonic component of the trace.
	
	\subsection{Extended numerical examples of BTF-PINN}
	\label{sec:extended-numerical-examples}
	
	\paragraph{Irregular domains}
	\label{sec:irregular-domains}
	
	We next test the same boundary-training-free formulation on two non-tensor-product
	domains.  The domains are defined implicitly by smooth functions.  More
	precisely, we take
	\[
	\Omega_m=\{x\in[-1,1]^m:\phi_m(x)>0\},\qquad m=2,3,
	\]
	and the boundary is given by the zero level set \(\phi_m=0\).  This implicit
	description allows the same interior loss to be evaluated on irregular domains
	without introducing a boundary-conforming parametrization in the training
	objective.  A schematic illustration of the two irregular test domains is shown
	in Figure~\ref{fig:irregular-domain-schematic}.
	
	\begin{figure}[!htbp]
		\centering
		\IfFileExists{irrdomain.jpg}{%
			\includegraphics[width=0.6\textwidth]{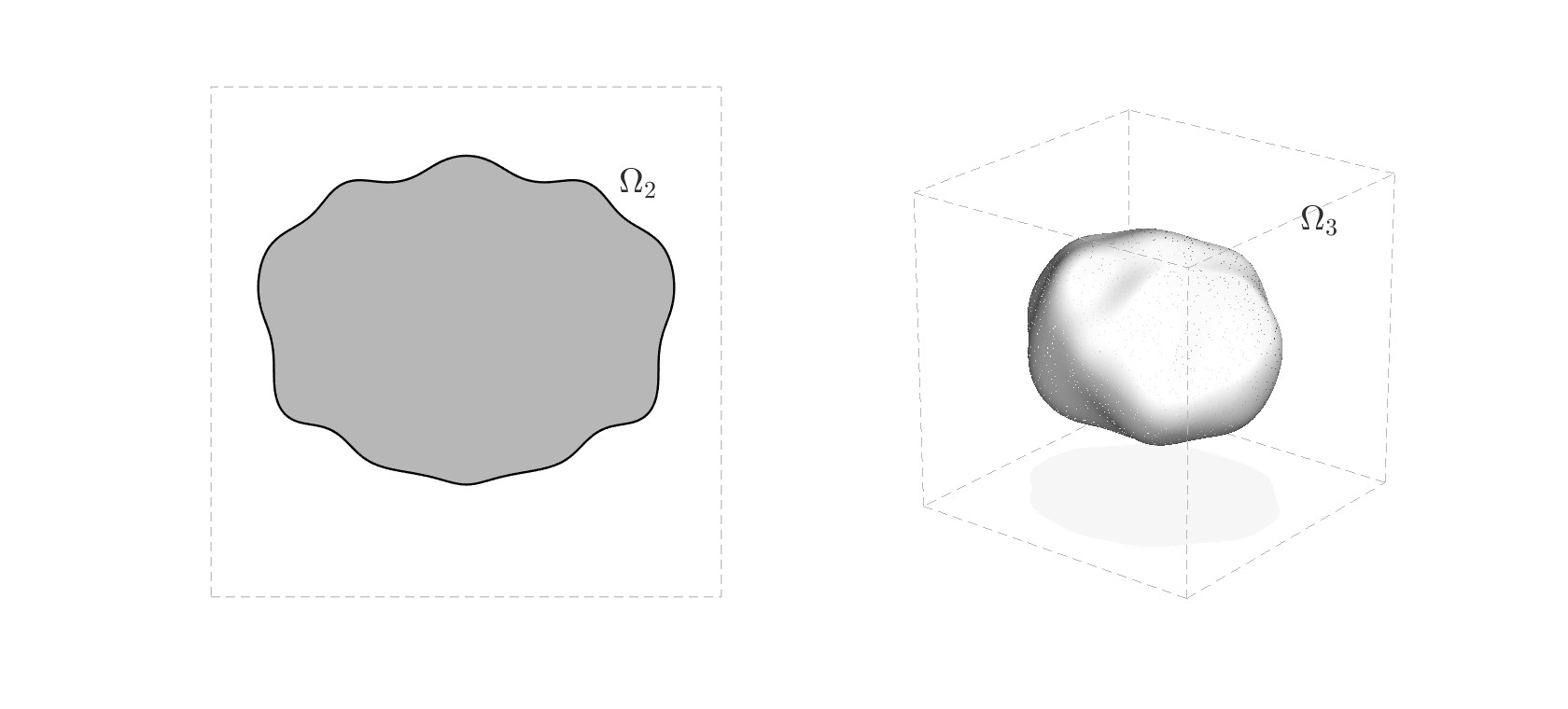}%
		}{%
			\fbox{\parbox{0.6\textwidth}{\centering Missing figure file:
					\texttt{irrdomain.jpg}.}}%
		}
		\caption{Schematic illustration of the irregular test domains.  The left panel illustrates the two-dimensional irregular domain, and the right
			panel illustrates the three-dimensional irregular domain.  The dashed boxes
			indicate the surrounding sampling boxes used to define the implicit domains.}
		\label{fig:irregular-domain-schematic}
	\end{figure}
	
	The manufactured solutions have the form \(u_m=\phi_m g_m\), and therefore
	vanish on \(\partial\Omega_m\).
	
	The two-dimensional domain is
	\[
	\Omega_2
	=
	\{(x,y)\in[-1,1]^2:\phi_2(x,y)>0\},
	\]
	where
	\[
	\phi_2(x,y)
	=
	1-\left(\frac{x}{0.82}\right)^2
	-\left(\frac{y-0.08}{0.62}\right)^2
	+0.12\cos(4\pi x)\cos(3\pi y).
	\]
	The exact solution is
	\[
	u_2(x,y)
	=
	\phi_2(x,y)
	\left[
	0.5
	+0.70\sin(2\pi x)\cos(\pi y)
	+0.35\cos(3\pi x-2\pi y)
	\right].
	\]
	
	The three-dimensional domain is
	\[
	\Omega_3
	=
	\{(x,y,z)\in[-1,1]^3:\phi_3(x,y,z)>0\},
	\]
	where
	\[
	\begin{aligned}
		\phi_3(x,y,z)
		={}&
		1-\left(\frac{x}{0.82}\right)^2
		-\left(\frac{y+0.04}{0.70}\right)^2
		-\left(\frac{z-0.06}{0.62}\right)^2  \\
		&+0.10\cos(3\pi x)\cos(2\pi y)
		+0.08\sin(2\pi z)\cos(2\pi x).
	\end{aligned}
	\]
	The exact solution is
	\[
	u_3(x,y,z)=\phi_3(x,y,z)g_3(x,y,z),
	\]
	with
	\[
	g_3(x,y,z)
	=
	0.45
	+0.45\sin(\pi x)\cos(2\pi y)
	+0.30\cos(2\pi x-\pi z)
	+0.22\sin(\pi y+2\pi z).
	\]
	In both cases the right-hand side is computed from the manufactured solution as
	\[
	f_m=-\Delta u_m .
	\]
	
	The training objective has the same form as in the previous experiments:
	\[
	\mathcal J_f^C(v_\theta)
	=
	C\|\Delta v_\theta+f\|_{L^2(\Omega),Q}
	+
	\|\nabla v_\theta\|_{L^2(\Omega),Q}.
	\]
	For these experiments, the residual weight is chosen from the
	Li--Yau estimate of the Dirichlet Poincar\'e constant.  Since the exact value of
	\(C_P\) is not available analytically on these implicit domains, we use the
	volume-based upper bound \(C_P^{\rm LY}\) given by
	\eqref{rem:li-yau-bound}, and take a \(20\%\) safety margin:
	\[
	C = 1.2\, C_P^{\rm LY}.
	\]
	This gives \(C=0.605\) for the two-dimensional irregular domain and
	\(C=0.456\) for the three-dimensional irregular domain.  This choice gives a
	conservative above-threshold residual weight without solving an auxiliary
	Dirichlet eigenvalue problem on the irregular domains.
	
	\begin{table}[!htbp]
		\centering
		\caption{Boundary-Training-Free runs on irregular domains. }
		\label{tab:irregular-domains}
		\begin{tabular}{c c c}
			\toprule
			case & \(E_{L^2}^{\rm sh}\) & \(E_{H^1}\)  \\
			\midrule
			2-D irregular
			& \(3.764\times10^{-3}\)
			& \(9.699\times10^{-3}\) \\
			3-D irregular
			& \(8.765\times10^{-3}\)
			& \(2.284\times10^{-2}\) \\
			\bottomrule
		\end{tabular}
	\end{table}
	
	The shifted relative \(L^2\)-error is
	\(3.764\times10^{-3}\) for the two-dimensional irregular domain and
	\(8.765\times10^{-3}\) for the three-dimensional one.  The corresponding
	relative \(H^1\)-seminorm errors are \(9.699\times10^{-3}\) and
	\(2.284\times10^{-2}\), respectively.

	\begin{figure}[!htbp]
		\centering
		\IfFileExists{fig_irregular2d_bfpinn_high_contrast.pdf}{%
			\includegraphics[width=0.45\textwidth,keepaspectratio]{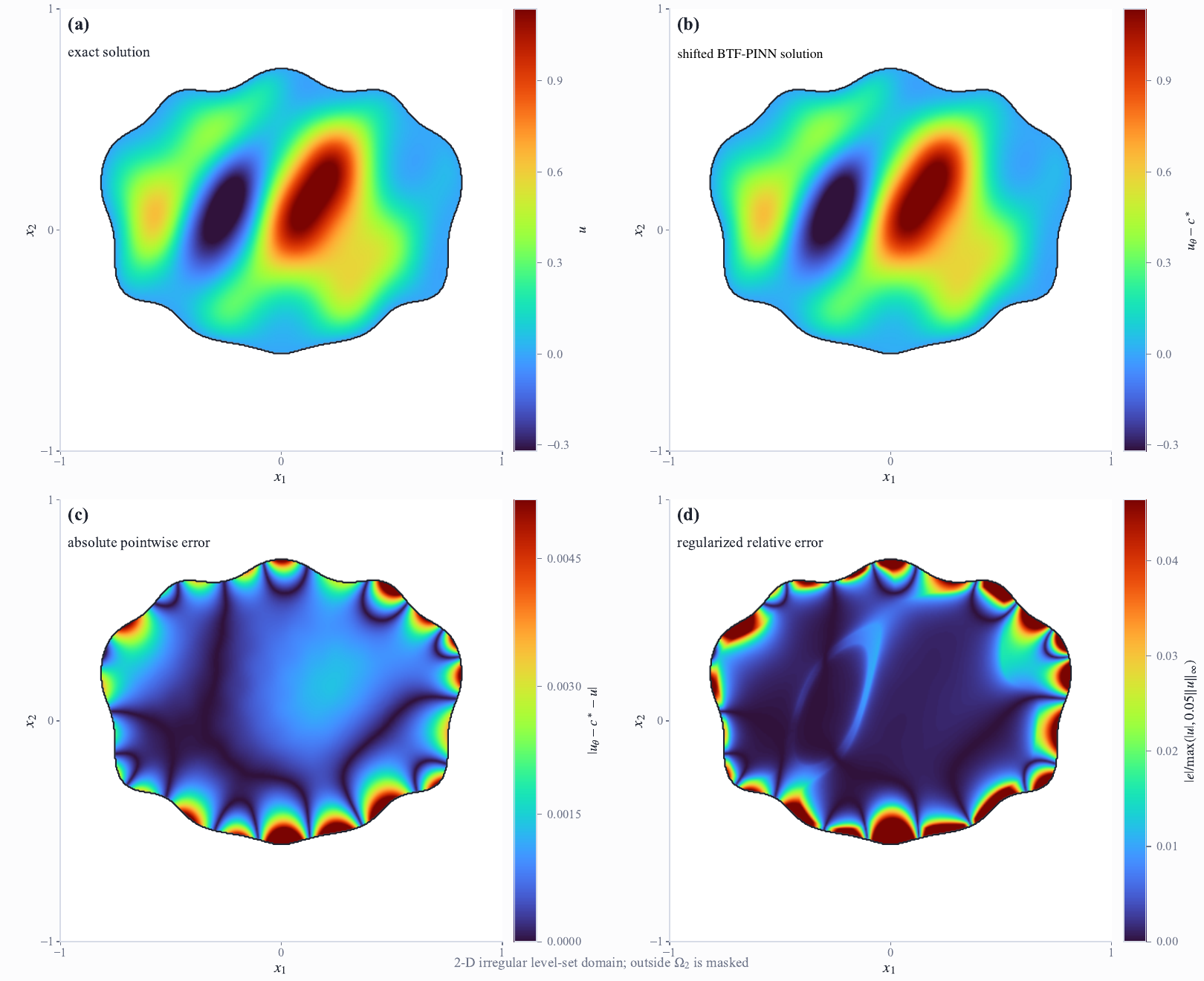}%
		}{%
			\fbox{\parbox{0.70\textwidth}{\centering Missing figure file:
					\texttt{fig\_irregular2d\_bfpinn\_high\_contrast.pdf}.}}%
		}
		\caption{Irregular two-dimensional domain.  The panels compare the exact
			solution, the shifted BTF-PINN prediction, the absolute pointwise error, and
			the regularized relative pointwise error.  The exterior of the domain is
			masked, and the black curve marks the boundary \(\phi_2=0\).}
		\label{fig:irregular2d}
	\end{figure}
	
	\begin{figure}[!htbp]
		\centering
		\IfFileExists{fig_irregular3d_slice_z0_high_contrast.pdf}{%
			\includegraphics[width=0.45\textwidth,keepaspectratio]{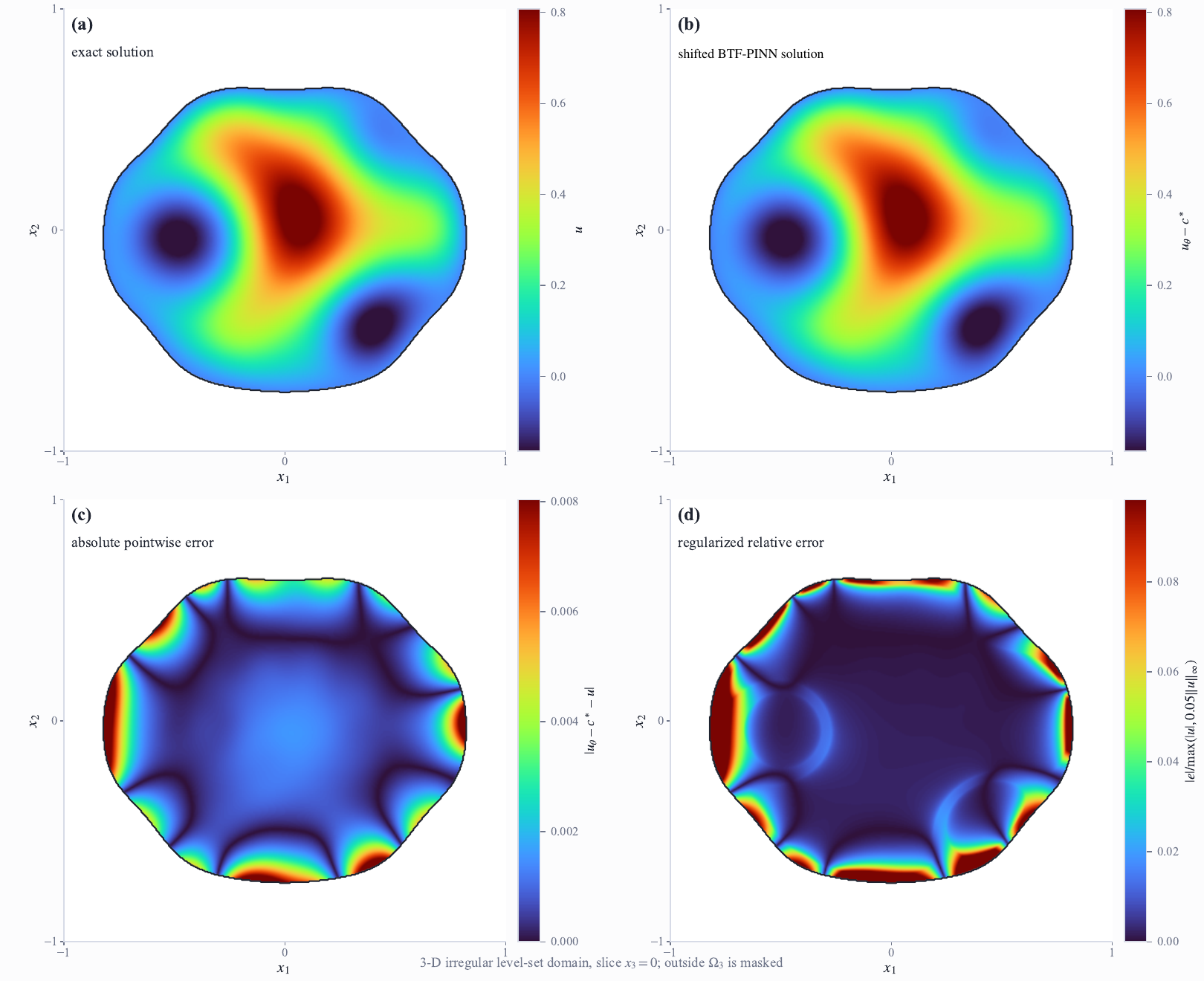}%
		}{%
			\fbox{\parbox{0.60\textwidth}{\centering Missing figure file:
					\texttt{fig\_irregular3d\_slice\_z0\_high\_contrast.pdf}.}}%
		}
		\caption{Central slice of the irregular three-dimensional domain.  The panels
			compare the exact solution, the shifted BTF-PINN prediction, the absolute
			pointwise error, and the regularized relative pointwise error on the slice
			\(z=0\).  The exterior of the domain is masked, and the black curve marks
			the slice of the boundary \(\phi_3=0\).}
		\label{fig:irregular3d-slice}
	\end{figure}
	
	\begin{figure}[!htbp]
		\centering
		\IfFileExists{fig_irregular3d_multislice_stacked.pdf}{%
			\includegraphics[height=0.34\textheight,keepaspectratio]{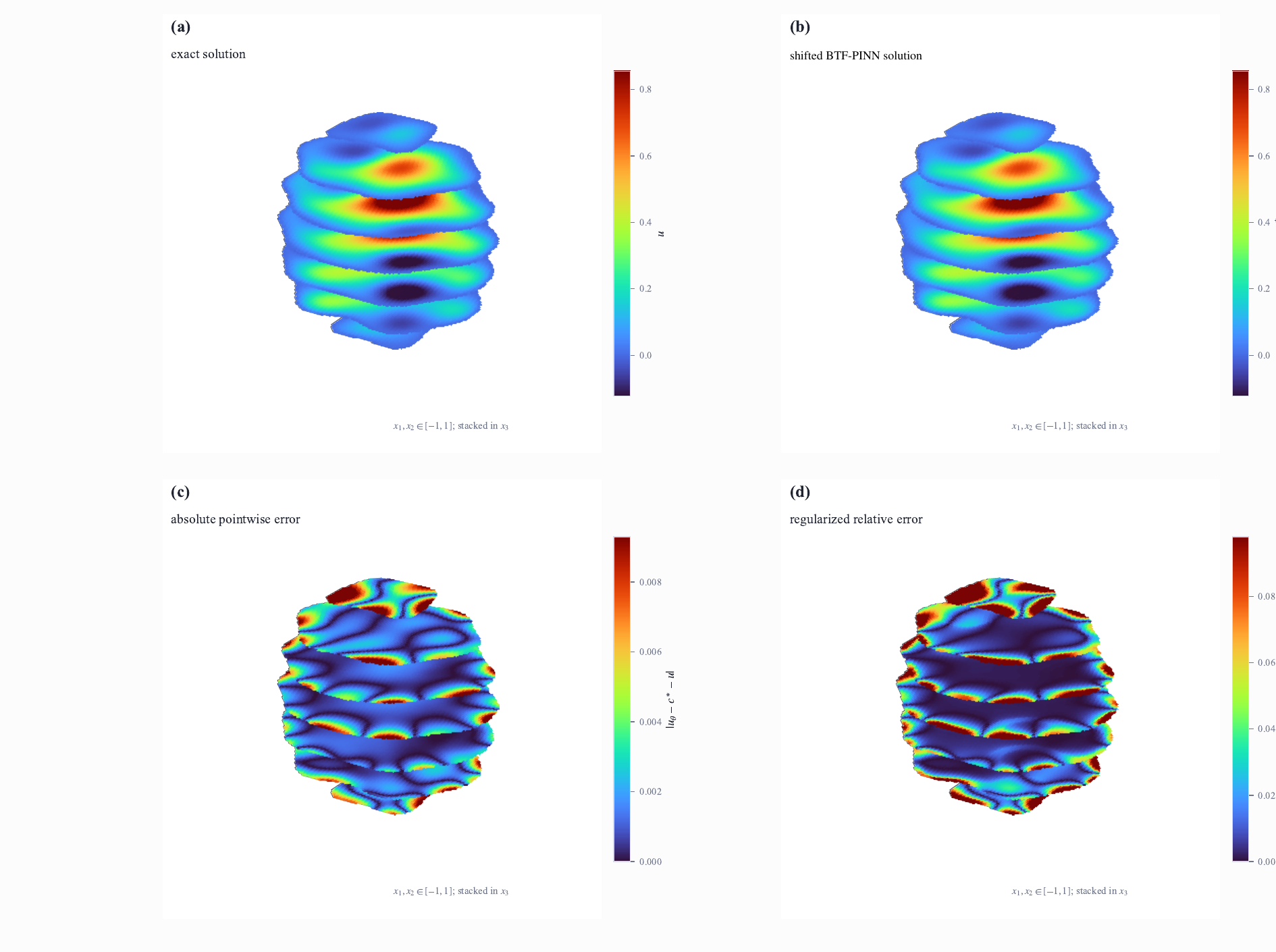}%
		}{%
			\fbox{\parbox{0.70\textwidth}{\centering Missing figure file:
					\texttt{fig\_irregular3d\_multislice\_stacked.pdf}.}}%
		}
		\caption{Stacked-slice visualization of the irregular three-dimensional
			domain.  Several \((x,y)\)-slices are shown along the \(z\)-direction.  Each
			slice is masked outside \(\Omega_3\), and the black curves mark the
			corresponding traces of \(\phi_3=0\).}
		\label{fig:irregular3d-stacked}
	\end{figure}
	
	\paragraph{Matrix-coefficient heat conduction on irregular domains}
	\label{sec:numerics-variable-heat}
	
	We next test the matrix-coefficient formulation of
	\ref{prop:variable-coefficient-extension} on the irregular domains introduced
	in \ref{sec:irregular-domains}.  The empirical loss is obtained from
	\(\mathcal J_{A,q}^C\) by replacing the continuous norms with interior
	quadrature.
	
	For the two-dimensional problem,
	\[
	A_2(x,y)
	=
	\begin{pmatrix}
		a_{11} & a_{12}\\
		a_{12} & a_{22}
	\end{pmatrix},
	\]
	where
	\[
	\begin{aligned}
		a_{11}
		&=
		1.80+0.25\sin(\pi x)\cos(\pi y)+0.10\cos(2\pi y),\\
		a_{22}
		&=
		1.40+0.20\cos(\pi x)\sin(\pi y)+0.10\sin(2\pi x),\\
		a_{12}
		&=
		0.22\sin(\pi x)\sin(\pi y)
		+0.08\cos(\pi x)\cos(\pi y).
	\end{aligned}
	\]
	The sampled eigenvalue range on \(\Omega_2\) is
	\[
	1.11098\le\lambda(A_2(x,y))\le2.14998.
	\]
	
	For the three-dimensional problem,
	\[
	A_3(x,y,z)
	=
	\begin{pmatrix}
		a_{11} & a_{12} & a_{13}\\
		a_{12} & a_{22} & a_{23}\\
		a_{13} & a_{23} & a_{33}
	\end{pmatrix},
	\]
	with
	\[
	\begin{aligned}
		a_{11}
		&=
		1.80+0.20\sin(\pi x)\cos(\pi y)+0.08\cos(\pi z),\\
		a_{22}
		&=
		1.50+0.18\cos(\pi x)\sin(\pi z)+0.08\sin(\pi y),\\
		a_{33}
		&=
		1.30+0.15\sin(\pi y)\cos(\pi z)+0.06\cos(2\pi x),\\
		a_{12}
		&=
		0.12\sin(\pi x)\sin(\pi y),\\
		a_{13}
		&=
		0.10\cos(\pi y)\sin(\pi z),\\
		a_{23}
		&=
		0.09\sin(\pi x-\pi z)\cos(\pi y).
	\end{aligned}
	\]
	The sampled eigenvalue range on \(\Omega_3\) is
	\[
	1.09026\le\lambda(A_3(x,y,z))\le2.07938.
	\]
	In both cases, the source term
	\[
	q=-\nabla\cdot(A\nabla T)
	\]
	is evaluated by automatic differentiation.
	
	In addition to \(E_{L^2}^{\rm sh}\) and \(E_{H^1}\), we report the relative
	matrix-energy error
	\[
	E_A
	=
	\frac{
		\left(
		\int_\Omega
		A\nabla(v_\theta-T)\cdot\nabla(v_\theta-T)\,\mathrm dx
		\right)^{1/2}
	}{
		\left(
		\int_\Omega
		A\nabla T\cdot\nabla T\,\mathrm dx
		\right)^{1/2}
	}.
	\]
	
	\begin{table}[!htbp]
		\centering
		\caption{Matrix-coefficient heat-conduction results on irregular domains.}
		\label{tab:matrix-heat-results}
		\resizebox{\textwidth}{!}{%
			\begin{tabular}{l c c c c c c}
				\toprule
				case & \(C\) &
				\(E_{L^2}^{\rm sh}\) & \(E_{H^1}\) & \(E_A\) &
				residual & \(B_{L^2}^{\rm sh}\) \\
				\midrule
				2-D matrix heat
				& \(1.40693{\times}10^{-1}\)
				& \(9.49366{\times}10^{-3}\)
				& \(1.86306{\times}10^{-2}\)
				& \(1.77322{\times}10^{-2}\)
				& \(7.332996{\times}10^{-2}\)
				& \(9.08267{\times}10^{-3}\) \\
				3-D matrix heat
				& \(1.53757{\times}10^{-1}\)
				& \(1.04341{\times}10^{-2}\)
				& \(2.54978{\times}10^{-2}\)
				& \(2.47860{\times}10^{-2}\)
				& \(1.12394{\times}10^{-1}\)
				& \(6.23611{\times}10^{-3}\) \\
				\bottomrule
		\end{tabular}}
	\end{table}
	
	\begin{figure}[!htbp]
		\centering
		\includegraphics[height=0.350\textheight]
		{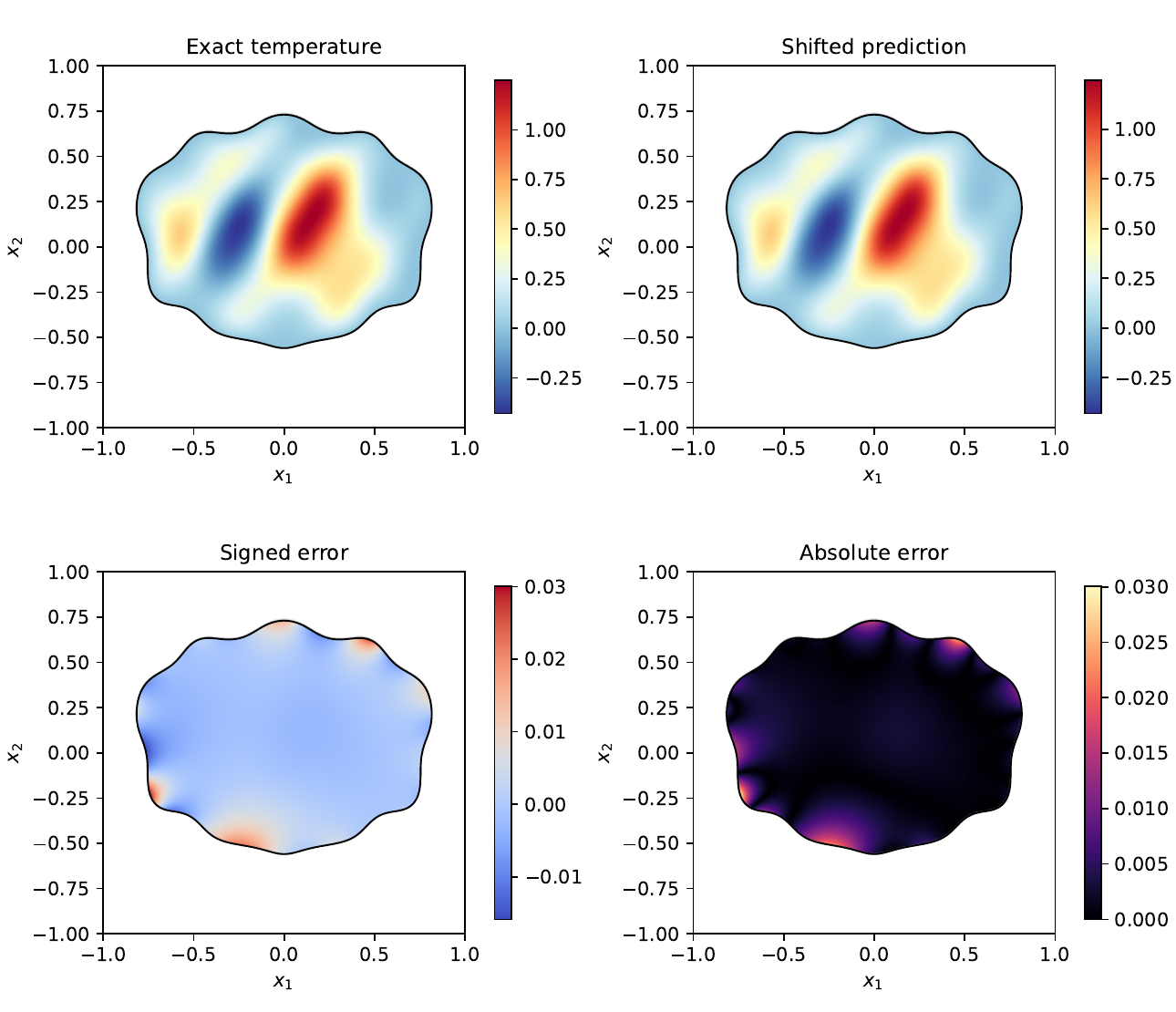}
		\caption{Two-dimensional anisotropic heat-conduction problem on an irregular
			domain.  The panels show the exact temperature, shifted prediction,
			signed error, and absolute error.}
		\label{fig:matrix-heat-2d}
	\end{figure}
	
	\begin{figure}[!htbp]
		\centering
		\includegraphics[height=0.350\textheight]
		{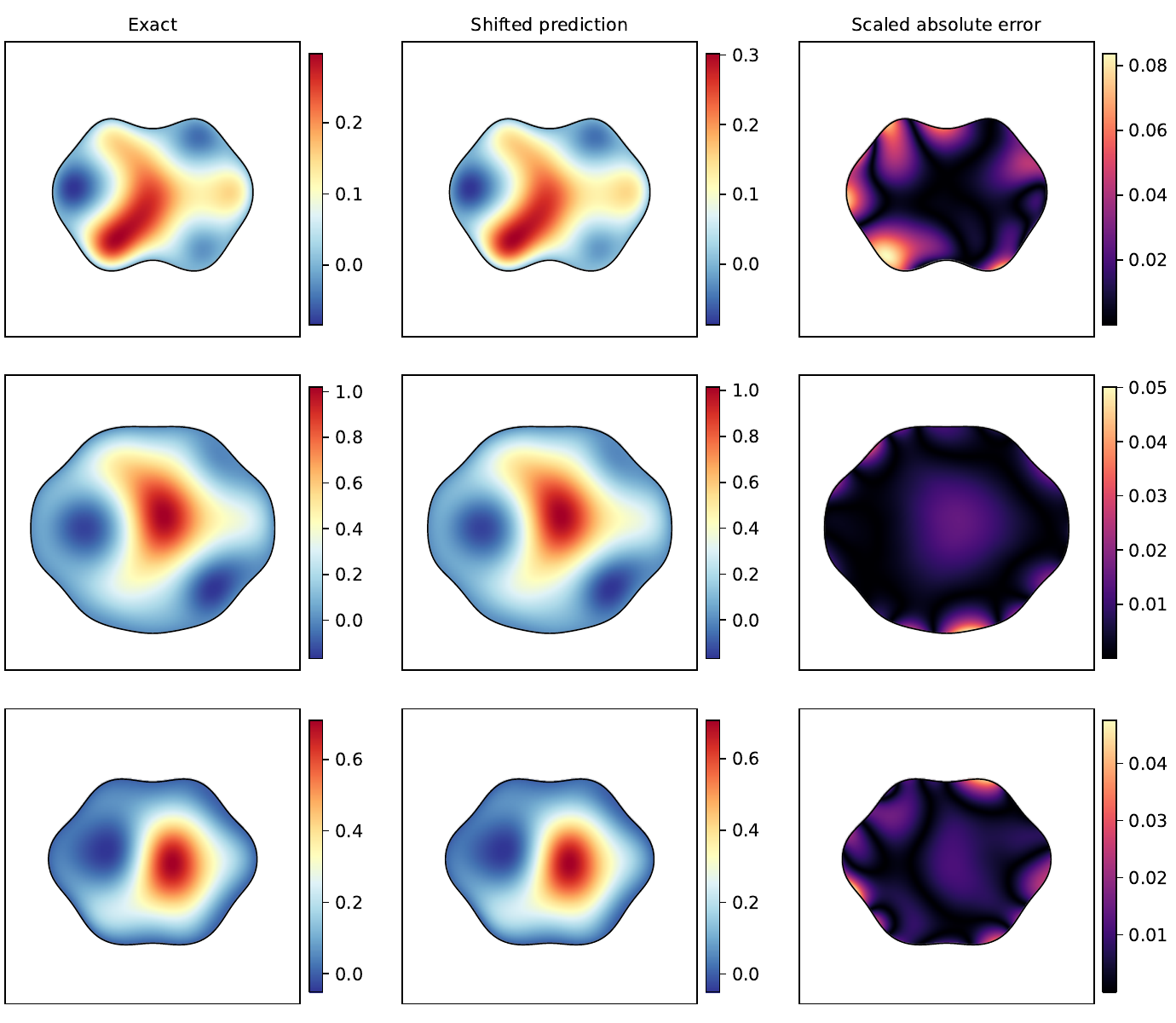}
		\caption{Three-dimensional anisotropic heat-conduction problem.
			Representative slices show the exact temperature, shifted prediction,
			and scaled absolute error.}
		\label{fig:matrix-heat-3d}
	\end{figure}
	
	The matrix-energy errors remain below \(2.5\times10^{-2}\), while the shifted
	relative \(L^2\) errors are approximately \(10^{-2}\).
	
	\paragraph{Interface problems}
	\label{sec:numerics-interface}
	
	In this subsection, we consider an elliptic interface problem to examine the applicability of the proposed formulation in the presence of internal interfaces.  Interface problems involve discontinuous coefficients and transmission conditions across internal interfaces, which introduce additional challenges compared with standard boundary value problems.
	
	The two-network interface formulation is not a direct instance of
	\ref{prop:variable-coefficient-extension}.  A piecewise strong residual does
	not control the interface distribution generated by a flux jump, so the
	continuity and flux conditions are imposed explicitly.
	
	We next combine the outer-boundary formulation with explicit transmission
	conditions.  Let
	\[
	\Omega=(0,1)^4,
	\qquad
	c=\left(\frac12,\ldots,\frac12\right),
	\qquad
	\Gamma=\{x:\|x-c\|=R\},
	\qquad
	R=0.35,
	\]
	and define
	\[
	\Omega^-=\{x:\|x-c\|<R\},
	\qquad
	\Omega^+=\Omega\setminus\overline{\Omega^-}.
	\]
	We consider
	\begin{equation}
		\begin{cases}
			-\nabla\cdot(\beta^\pm\nabla u^\pm)=f^\pm,
			&x\in\Omega^\pm,\\
			u=0,&x\in\partial\Omega,\\
			[u]=0,&x\in\Gamma,\\
			[\beta\partial_nu]=0,&x\in\Gamma,
		\end{cases}
		\label{eq:interface-problem}
	\end{equation}
	with
	\[
	\beta^-=10,
	\qquad
	\beta^+=1.
	\]
	The manufactured solution is defined by
	\[
	p(x)=\prod_{j=1}^4x_j(1-x_j),
	\qquad
	s(x)=\|x-c\|^2-R^2,
	\qquad
	u^\pm(x)=\frac{1000}{\beta^\pm}p(x)s(x).
	\]
	Consequently,
	\[
	f^\pm(x)
	=
	-\beta^\pm\Delta u^\pm(x)
	=
	-1000\,\Delta\bigl(p(x)s(x)\bigr),
	\]
	and the exact solution satisfies the outer homogeneous Dirichlet condition
	and both interface jump conditions.
	
	Two independent subnetworks represent \(u^-_\theta\) and \(u^+_\theta\).
	The outer Dirichlet condition is handled by the BTF-PINN objective, while the
	interface conditions enter through separate loss terms.  The optimized
	functional is
	\begin{equation}
		\begin{aligned}
			\mathcal J(v_\theta)
			={}&
			C_RR_{\rm bulk}(v_\theta)
			+
			\mathcal E_\beta(v_\theta)\\
			&+
			\lambda_DJ_{\Gamma,D}(v_\theta)
			+
			\lambda_NJ_{\Gamma,N}(v_\theta),
		\end{aligned}
		\label{eq:hybrid-interface-loss}
	\end{equation}
	where
	\[
	\begin{aligned}
		R_{\rm bulk}^2
		={}&
		\|-\beta^-\Delta v^-_\theta-f^-\|_{L^2(\Omega^-)}^2
		+
		\|-\beta^+\Delta v^+_\theta-f^+\|_{L^2(\Omega^+)}^2,\\
		\mathcal E_\beta(v_\theta)^2
		={}&
		\beta^-\|\nabla v^-_\theta\|_{L^2(\Omega^-)}^2
		+
		\beta^+\|\nabla v^+_\theta\|_{L^2(\Omega^+)}^2,
	\end{aligned}
	\]
	and
	\[
	J_{\Gamma,D}
	=
	\|v^+_\theta-v^-_\theta\|_{L^2(\Gamma)},
	\qquad
	J_{\Gamma,N}
	=
	\|\beta^+\partial_nv^+_\theta-\beta^-\partial_nv^-_\theta\|_{L^2(\Gamma)}.
	\]
	The residual coefficient \(C_R\) is chosen relative to the weighted energy
	term, while the interface penalties control the continuity and flux
	conditions.  The interface penalty parameters \(\lambda_D\) and \(\lambda_N\) are selected empirically from a prescribed range to balance the interface continuity and
	flux constraints. The reported results correspond to the best-performing
	combination in this search.

	Table~\ref{tab:interface-4d-results} reports the trained model used in
	Figure~\ref{fig:interface-4d-slice}.  This run was selected by the smallest
	shifted \(L^2\) error among the tested configurations.  The bulk residual
	coefficient used in this run is
	\[
	C_R=1.13082\times10^{-1}.
	\]
	
	The relative weighted energy error reported below is
	\[
	E_{\beta,\nabla}
	=
	\frac{
		\left(
		\beta^-\|\nabla(v^-_\theta-u^-)\|_{L^2(\Omega^-)}^2
		+
		\beta^+\|\nabla(v^+_\theta-u^+)\|_{L^2(\Omega^+)}^2
		\right)^{1/2}
	}{
		\left(
		\beta^-\|\nabla u^-\|_{L^2(\Omega^-)}^2
		+
		\beta^+\|\nabla u^+\|_{L^2(\Omega^+)}^2
		\right)^{1/2}
	}.
	\]
	
	\begin{table}[!htbp]
		\centering
		\caption{Four-dimensional hybrid interface result for the trained model
			visualized in Figure~\ref{fig:interface-4d-slice}.  Errors are reported
			after removing the additive constant.}
		\label{tab:interface-4d-results}
		\resizebox{\textwidth}{!}{%
			\begin{tabular}{c c c c c c}
				\toprule
				\(C_R\) & \(E_{L^2}^{\rm sh}\) & \(E_{H^1}\) &
				\(E_{\beta,\nabla}\) & \(B_{L^2}^{\rm sh}\) & flux jump \\
				\midrule
				\(1.13082{\times}10^{-1}\)
				& \(5.44564{\times}10^{-2}\)
				& \(8.86615{\times}10^{-2}\)
				& \(8.77126{\times}10^{-2}\)
				& \(9.47698{\times}10^{-3}\)
				& \(6.71788{\times}10^{-4}\) \\
				\bottomrule
		\end{tabular}}
	\end{table}
	
	\begin{figure}[!htbp]
		\centering
		\includegraphics[width=0.70\textwidth]
		{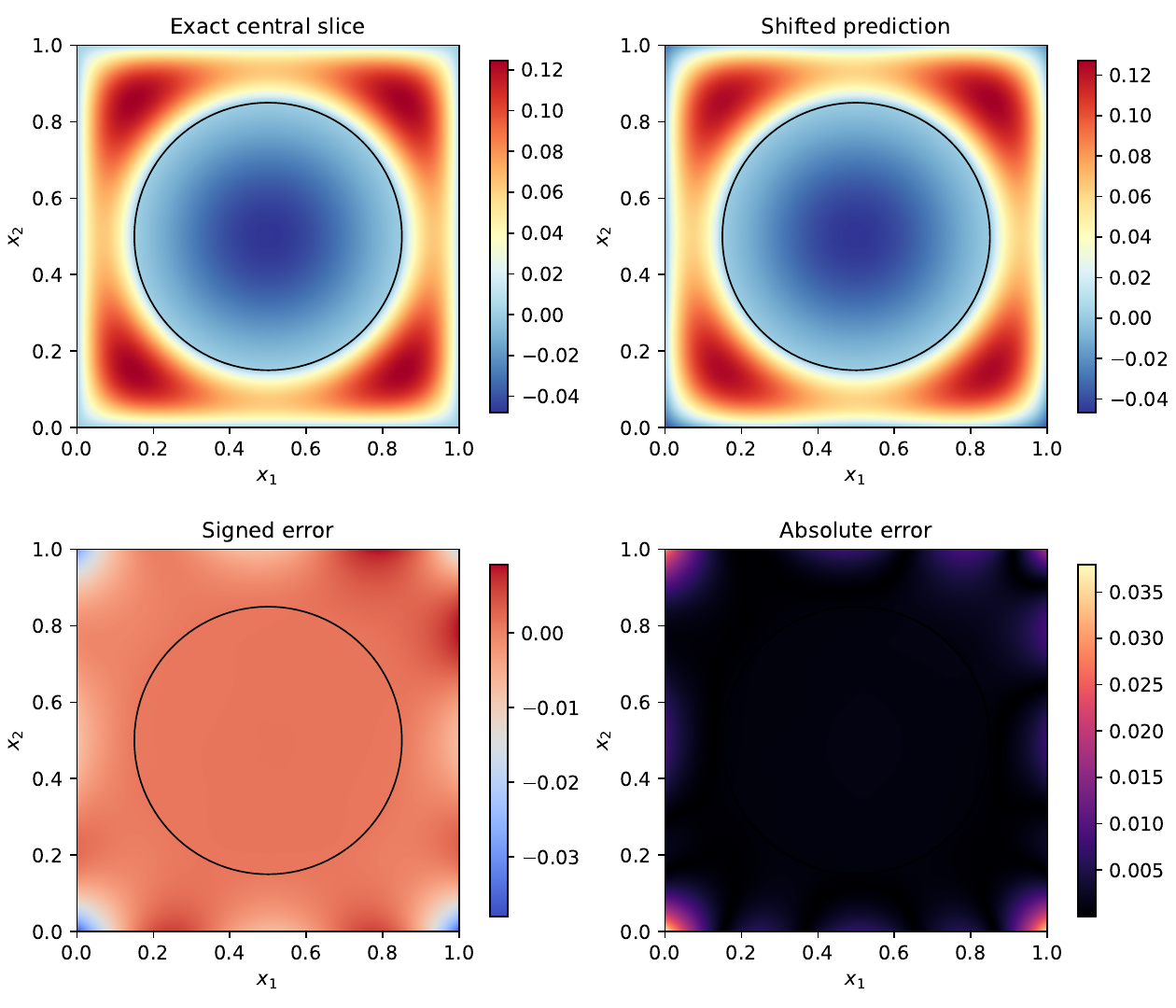}
		\caption{Central two-dimensional slice of the four-dimensional
			interface result reported in Table~\ref{tab:interface-4d-results}.
			The panels show the exact solution, the shifted neural-network
			prediction, the signed error, and the absolute error.  The black curve
			is the intersection of the spherical interface with the slice
			\(x_3=x_4=1/2\).}
		\label{fig:interface-4d-slice}
	\end{figure}
	The result gives a shifted outer-boundary error of
	\(9.48\times10^{-3}\) and a flux jump of \(6.72\times10^{-4}\).

	\paragraph{\(10\)-D Poisson problem}
	\label{sec:10d}
	
	We consider a ten-dimensional product-mode benchmark on
	\[
	\Omega=(0,1)^{10}.
	\]
	The boundary consists of twenty \(9\)-dimensional faces, none of which is used
	in the BTF-PINN training loss.
	
	The manufactured solution is the normalized product mode
	\begin{equation}
		u(x)
		=
		2^{d/2}\prod_{j=1}^{d}\sin(\pi x_j),
		\qquad
		f(x)=d\pi^2u(x),
		\qquad d=10,
		\label{eq:10d-product-solution}
	\end{equation}
	so that \(\|u\|_{L^2(\Omega)}=1\).  The predicted threshold is
	\[
	C_\ast=(\pi\sqrt{10})^{-1}=0.100658,
	\]
	and we use
	\[
	C=0.15=1.490\,C_\ast .
	\]
	
	\begin{table}[!htbp]
		\centering
		\caption{A representative \(10\)-D boundary-training-free run at \(C=0.15\).
			This run is used for the slice visualizations in
			Figures~\ref{fig:10d-slice-diagnostics} and
			\ref{fig:10d-stacked-slices}.}
		\label{tab:10d-representative}
		\begin{tabular}{c c c c c}
			\toprule
			\(C\) & \(C/C_\ast\) & \(E_{H^1}\) & \(E_{L^2}^{\rm sh}\) & \(B_{L^2}^{\rm sh}\) \\
			\midrule
			\(0.150\)
			& \(1.490\)
			& \(1.079\times10^{-1}\)
			& \(6.568\times10^{-2}\)
			& \(8.245\times10^{-2}\) \\
			\bottomrule
		\end{tabular}
	\end{table}
	
	Table~\ref{tab:10d-representative} reports one representative run at
	\(C=0.15\).  The relative \(H^1\)-seminorm error is \(10.79\%\), the shifted
	relative \(L^2\) error is \(6.57\%\), and the shifted posterior boundary
	\(L^2\) error is \(8.25\%\).  This run serves as the visualization checkpoint;
	robustness is assessed separately over multiple independent runs in
	Table~\ref{tab:10d-robustness}.
	
	To visualize the trained high-dimensional field, we inspect the trained
	\(10\)-D network from Table~\ref{tab:10d-representative} through
	lower-dimensional sections.  The first visualization fixes
	\(x_3=\cdots=x_{10}=0.5\) and displays the \((x_1,x_2)\)-slice of the aligned
	prediction, the exact solution, the absolute pointwise error, and the
	pointwise relative error.
	
	\begin{figure}[!htbp]
		\centering
		\IfFileExists{fig_heatmap_grid.pdf}{%
			\includegraphics[height=0.4\textheight,keepaspectratio]{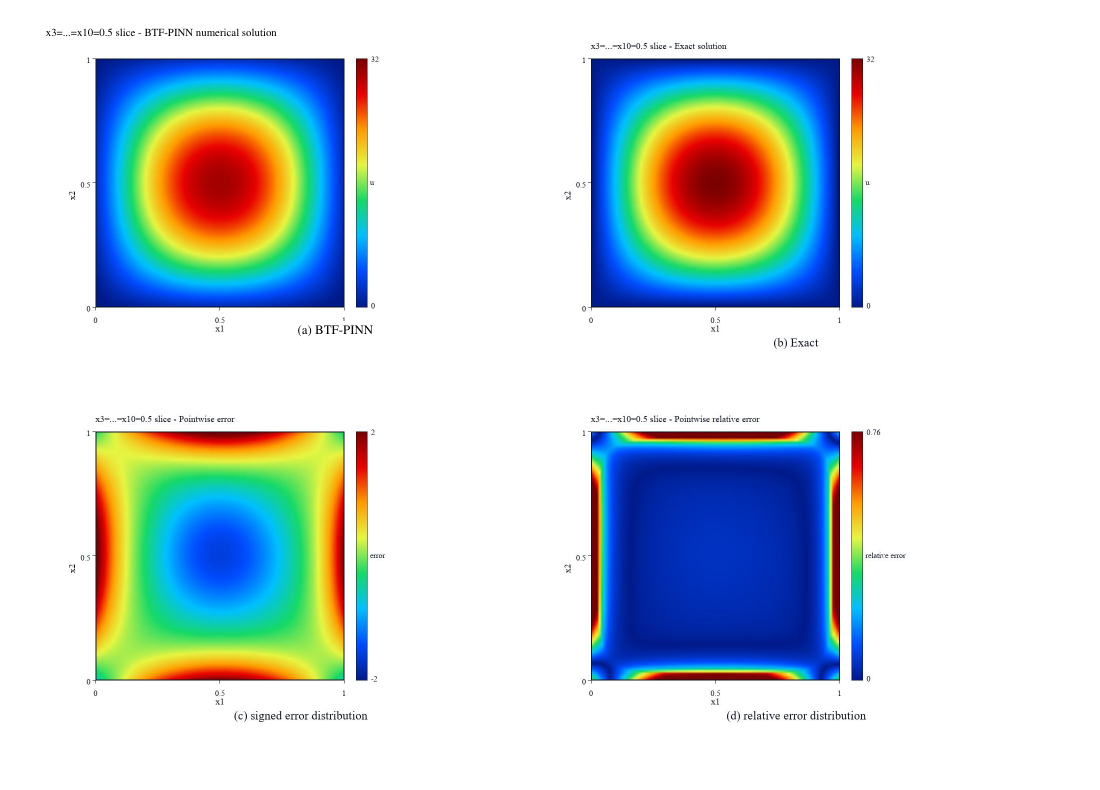}%
		}{%
			\fbox{\parbox{0.70\textwidth}{\centering Missing figure file:
					\texttt{fig\_2607\_style\_10d\_heatmap\_grid.pdf}.}}%
		}
		\caption{Two-dimensional slice diagnostics for the representative \(10\)-D
			run in Table~\ref{tab:10d-representative}.  The panels display the aligned
			BTF-PINN prediction, the exact solution, the absolute pointwise error, and
			the pointwise relative error.}
		\label{fig:10d-slice-diagnostics}
	\end{figure}
	
	A complementary three-dimensional view is obtained by fixing
	\(x_4=\cdots=x_{10}=0.5\) and plotting stacked slices in the
	\((x_1,x_2,x_3)\) variables.  This is not a separate three-dimensional problem, but a visualization of the
	same \(10\)-D trained network on a three-dimensional section of the cube.
	
	\begin{figure}[!htbp]
		\centering
		\IfFileExists{fig_3d_stacked_slices.pdf}{%
			\includegraphics[height=0.4\textheight,keepaspectratio]{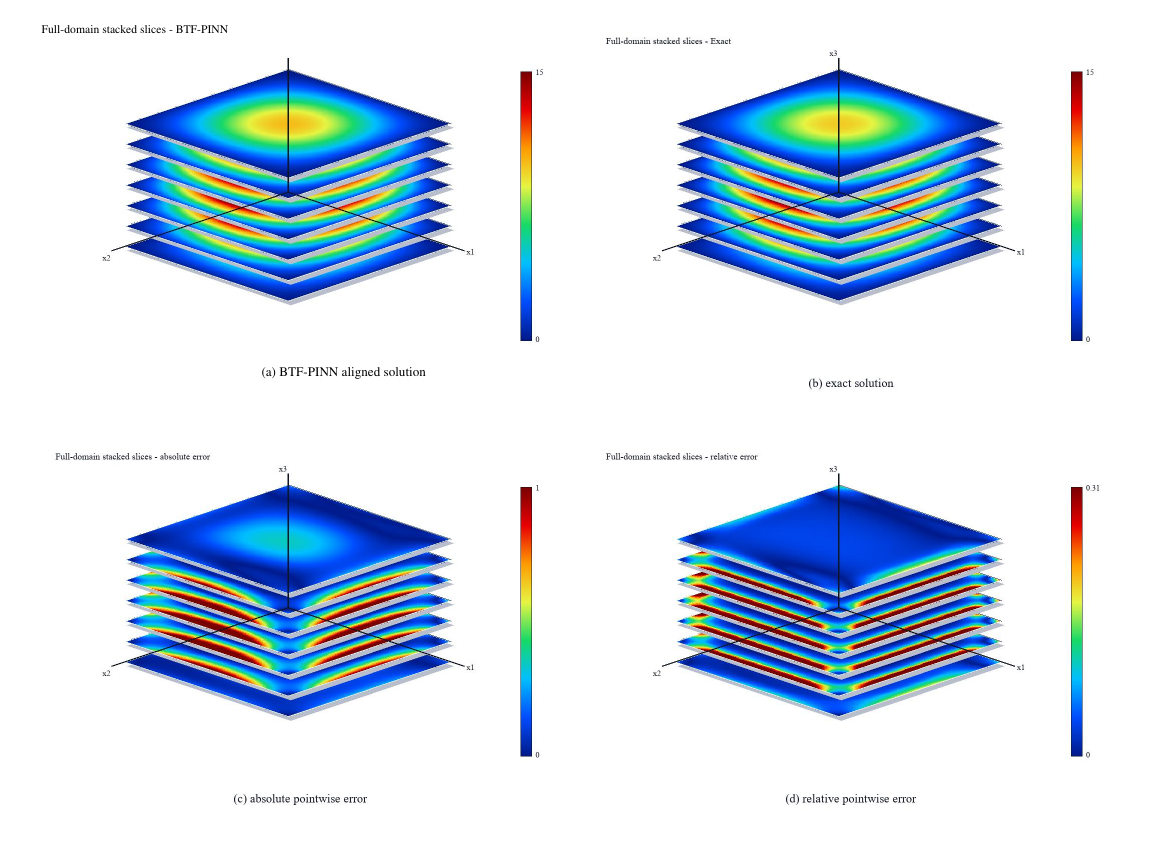}%
		}{%
			\fbox{\parbox{0.70\textwidth}{\centering Missing figure file:
					\texttt{fig\_2607\_style\_3d\_stacked\_slices.pdf}.}}%
		}
		\caption{Stacked-slice visualization of the representative \(10\)-D run in
			Table~\ref{tab:10d-representative}.  Several \((x_1,x_2)\)-slices are shown
			along the \(x_3\)-direction after fixing \(x_4,\ldots,x_{10}=0.5\).}
		\label{fig:10d-stacked-slices}
	\end{figure}
	
	\medskip
	\noindent\textbf{Robustness over independent runs.}
	We also report four independent runs at \(C=0.15\), all obtained with the
	same training protocol.
	
	\begin{table}[!htbp]
		\centering
		\caption{\(10\)-D robustness at \(C=0.15\).  Entries are mean \(\pm\) standard deviation over four independently obtained runs.  The boundary is used only for posterior diagnostics.}
		\label{tab:10d-robustness}
		\begin{tabular}{c c c c }
			\toprule
			runs & \(C/C_\ast\) & \(E_{H^1}\) & \(E_{L^2}^{\rm sh}\) \\
			\midrule
			\(4\)
			& \(1.490\)
			& \(1.602\times10^{-1}\pm3.546\times10^{-2}\)
			& \(1.078\times10^{-1}\pm2.940\times10^{-2}\)
			\\
			\bottomrule
		\end{tabular}
	\end{table}
	
	Across four independent runs, the mean relative \(H^1\)-seminorm error is
	\(1.602\times10^{-1}\) and the mean shifted relative \(L^2\) error is
	\(1.078\times10^{-1}\).  The shifted posterior boundary error is of the same
	order.  Thus the high-dimensional behavior is not an isolated favorable
	snapshot. The observed variance across independent runs can be attributed to the
	non-convex loss landscape and the inherent uncertainty of neural-network optimization.
	
	\paragraph{Summary of numerical results}
	The numerical results demonstrate that BTF-PINN can accurately solve
	homogeneous Dirichlet problems without boundary sampling or boundary
	penalties, including high-dimensional examples where boundary discretization
	becomes increasingly costly.  The comparison with boundary-penalty PINNs shows
	that the proposed interior-only formulation can recover boundary behavior
	comparable to that of explicitly constrained methods.  The residual-weight study
	confirms the predicted threshold behavior, with stable accuracy obtained when
	the weight is chosen above the theoretical scale.  Additional experiments on
	matrix-coefficient elliptic problems, irregular domains, and interface
	problems further illustrate the applicability of the proposed interior
	variational strategy.

	\FloatBarrier

    \section{Conclusion}
	\label{sec:conclusion}
	
	We introduced a boundary-free variational principle for homogeneous Dirichlet
	problems and developed its neural implementation, BTF-PINN. The proposed
	formulation characterizes the Dirichlet boundary condition through a
	purely interior functional.
	The harmonic decomposition of the error provides the underlying mechanism: the
	residual term controls the zero-trace component, while the Dirichlet seminorm
	controls the harmonic component up to an additive constant. For
	\(C>C_P\), the Dirichlet solution is uniquely characterized up to an additive constant.
	
	The proposed variational principle is established at the level of general
	trial classes rather than being restricted to neural networks. Based on the
	error control established by the variational principle, we develop a convergence analysis framework involving approximation properties of the trial class and \(\varepsilon\)-optimal
	solutions, thereby avoiding the assumption that the infimum is attained in the neural
	optimization problem.
	
	More broadly, the underlying boundary-free formulation of Dirichlet problems is not confined to the $H^1$ setting, but opens a natural pathway toward higher-order $H^m$ problems, suggesting a unified perspective on the treatment of essential boundary conditions across different orders.

	\section*{Acknowledgments}
	
	This work was partially supported by the National Natural Science Foundation of China (NSFC) under Grant Nos. 92370205 and 12271512. 
	
	\bibliographystyle{cas-model2-names}
	
	\bibliography{btfpinn}
\end{document}